\newtheorem{theorem}{Theorem}[section]
\newtheorem{lemma}[theorem]{Lemma}
\theoremstyle{definition}
\theoremstyle{remark}
\numberwithin{equation}{section}
\begin{document}

\title{Completeness of Discrete Translates in $H^1(\mathbb{R})$}

\author{Bhawna Dharra}
\address{Department of Mathematics, Indian Institute of Technology Delhi, 
New Delhi-110016, India}
\email{bhawna@maths.iitd.ac.in}

\author{S. Sivananthan }
\address{Department of Mathematics, Indian Institute of Technology Delhi, 
New Delhi-110016, India}
\email{siva@maths.iitd.ac.in}

\subjclass[2020]{Primary 42B30; Secondary 42C30, 42A65}


\keywords{Hardy space, discrete translates, generator, molecule, Beurling-Malliavin Density}

\begin{abstract}
We provide a characterization of discrete sets $\Lambda \subset \mathbb{R}$ that admit a function whose $\Lambda$-translates are complete in the Hardy space $H^1(\mathbb{R})$. In particular, we show that such a set cannot be uniformly discrete. We then give a uniformly discrete $\Lambda \subset \mathbb{R}$ which admits a pair of functions such that their $\Lambda$-translates are complete in $H^1(\mathbb{R})$.
\end{abstract}

\maketitle


\section{Introduction}
 The Hardy space plays an important role in the theory of Fourier analysis. The theory of Hardy space originated to understand the convergence of the Fourier series and the growth of analytic functions when it approaches the boundary of the unit disk in the complex plane \cite{duren1970, Zygmund1, Zygmund2}. The seminal works of Stein and Weiss \cite{steinweiss60}, and Fefferman and Stein \cite{feffermanstein72} developed the real Hardy spaces  on $n$-dimensional space $H^p(\mathbb{R}^n)$ $(0<p<\infty)$. They characterized the space with different definitions that allow working with the space easily. In fact, for $1<p<\infty$, $H^p$ coincides with the Lebesgue spaces $L^p$. The fundamental operators in Harmonic Analysis, like singular integral operators, maximal operators, and Littlewood-Paley functions, are well behaved (bounded) on the Lebesgue spaces $L^p(\mathbb{R}^n)$, $1<p<\infty$ but are not bounded on $L^1(\mathbb{R}^n)$. The Hardy space $H^1(\mathbb{R}^n)$ appears as a natural substitute in order to overcome the shortcomings of $L^1(\mathbb{R}^n)$. Many results concerning the completeness of translates of a function are studied only for the classical Lebesgue spaces \cite{atzmonolevskii96, beurling51, blank06, olevskiiulanovskii06, levolevskii11, nikolski99, olevskiiulanovskii18I, olevskiiulanovskii18II}. We thus aim to investigate the completeness of systems formed by discrete translates of a function in the Hardy space $H^1(\mathbb{R}^n)$.

Wiener \cite{wiener32} characterized the functions $\phi$ whose all translates are complete in the
Lebesgue spaces $L^1(\mathbb{R})$ and $L^2(\mathbb{R})$ in terms of the zero set of its Fourier transform. To be precise, Wiener proved that $span \{\tau_{\lambda}\phi\}_{\lambda \in \mathbb{R} }$ is dense in $L^1(\mathbb{R})$ if and only if its
Fourier transform $\hat{\phi}$ never vanishes, and $span \{\tau_{\lambda}\phi \}_{\lambda \in \mathbb{R} }$ is dense in $L^2(\mathbb{R})$ if and only if $\hat{\phi} \neq 0 $ $a.e.$ on $\mathbb{R}$, where we define $\tau_{\lambda}\phi(\cdot):= \phi(\cdot - \lambda) $. For $1<p<2$, Beurling \cite{beurling51} gave a sufficient condition that $ \{\tau_{\lambda}\phi \}_{\lambda \in \mathbb{R}}$ is complete in $L^p(\mathbb{R})$ whenever Hausdorff dimension of the zero set of $\hat{\phi}$ is less than ${2(p-1)}/{p}$. However, Lev and Olevskii \cite{levolevskii11} proved that a characterization in terms of zero set of Fourier transform is not possible for $L^p(\mathbb{R})$ spaces when $1<p<2$. More generally, Feichtinger and Gumber \cite{feichtingergumber20} showed that $\hat{g}(x) \neq 0$ for all $x \in \mathbb{R}^n$ is sufficient for all translates to be complete in any minimal tempered standard space which are defined to be translation and modulation invariant distribution spaces satisfying some conditions.

We are particularly interested in system of discrete translates, i.e., when $\Lambda$ is a discrete subset of  $\mathbb{R}^n$. It easily follows from the Plancherral theorem that there does not exist any function $\Phi$ whose $\mathbb{Z}^n$-translates are complete in $L^2(\mathbb{R}^n)$. Same holds true for $L^p(\mathbb{R}^n)$ for $1 \leq p < 2$ \cite{nikolski99}. However, for $p > 2$ there exist functions in $L^p(\mathbb{R}^n)$ whose $\mathbb{Z}^n_{+}$-translates are complete (\cite{atzmonolevskii96}, \cite{nikolski99}). The situation changes when we consider uniformly discrete sets other than $\mathbb{Z}^n$. Olevskii and Ulanovskii \cite{olevskiiulanovskii18I} showed that there exist uniformly discrete set $\Lambda$ and a Schwartz class function whose $\Lambda$-translates are complete in $L^p(\mathbb{R}^n)$ for any $p > 1$. 

 A complete characterization of all the sets $\Lambda$ admitting functions whose $\Lambda$-translates are complete was first given for $L^1(\mathbb{R})$ \cite{olevskiiulanovskii06}. Later, Blank \cite{blank06} extended the results for the weighted Lebesgue spaces $L^1_w(\mathbb{R})$, where $w$ is a non-quasianalytic weight function.

$H^1(\mathbb{R})$ and $L^1(\mathbb{R})$ are Banach algebras that have contrasting behaviours even though $H^1(\mathbb{R}) \subset L^1(\mathbb{R})$. For example, $L^1(\mathbb{R})$ possess bounded approximate identity but  $H^1(\mathbb{R})$ has no bounded approximate identity. Under modulation operator, the space $L^1(\mathbb{R})$ is invariant  but $H^1(\mathbb{R})$ is not invariant (See \cite{johnsonwarner10}). Despite the disparity in these spaces, we get an analogous result to $L^1(\mathbb{R})$ (\cite{olevskiiulanovskii06}) that for any discrete $\Lambda \subset \mathbb{R}$, $\Lambda$-translates of some function in $H^1(\mathbb{R})$ are complete if and only if $\Lambda$ is large in a certain sense:
\begin{theorem}\label{generatingsetch}
Let $\Lambda \subset \mathbb{R}$ be a discrete set. Then there exists $f \in H^1(\mathbb{R})$ such that $ \{\tau_{\lambda}f\}_{\lambda \in \Lambda}$ is  complete in $ H^1(\mathbb{R})$ if and only if the Beurling-Malliavin density of $\Lambda$ $D_{BM}(\Lambda)$ is not finite.\\
\end{theorem}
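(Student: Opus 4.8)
The plan is to prove both directions by transferring the completeness problem from $H^1(\mathbb{R})$ to a statement about annihilating measures/functionals and then analyzing their Fourier sides via the Beurling–Malliavin theory. Let me sketch how I would attack each direction.

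The plan is to pass to the dual space and convert completeness into a statement about the zero sets of functions with controlled spectrum, where the Beurling--Malliavin theory applies. Since $(H^1(\mathbb{R}))^\ast = BMO(\mathbb{R})$, the system $\{\tau_\lambda f\}_{\lambda\in\Lambda}$ is complete if and only if the only $\phi\in BMO(\mathbb{R})$ with $\langle\tau_\lambda f,\phi\rangle=0$ for all $\lambda\in\Lambda$ is the zero class. I would encode the whole annihilation condition in the single function $F_\phi(t):=\langle\tau_t f,\phi\rangle$. Because translation is an isometry of $H^1$, we have $|F_\phi(t)|\le\|f\|_{H^1}\|\phi\|_{BMO}$ and $t\mapsto\tau_t f$ is $H^1$-continuous, so $F_\phi$ is bounded and uniformly continuous, it satisfies $F_\phi|_\Lambda=0$, and at the level of tempered distributions $\widehat{F_\phi}=\widehat{\widetilde f}\cdot\widehat{\phi}$, whence $\mathrm{supp}\,\widehat{F_\phi}$ is governed by $\mathrm{supp}\,\widehat f$. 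Recall also that $f\in H^1$ forces $\widehat f$ continuous with the cancellation $\widehat f(0)=0$. Thus everything reduces to understanding bounded functions vanishing on $\Lambda$ whose spectrum is constrained by that of the generator.

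For necessity I would argue the contrapositive: assuming $D_{BM}(\Lambda)$ finite, I want to exhibit, for an \emph{arbitrary} $f\in H^1$, a nonzero annihilator. First one may dispose of degenerate $f$: if $\widehat f$ vanishes on a set of positive measure then all $\tau_\lambda f$ share a spectral gap and $f$ plainly fails to generate, so it suffices to treat $f$ with $\widehat f\neq 0$ almost everywhere. By the Beurling--Malliavin completeness (multiplier) theorem, $D_{BM}(\Lambda)<\infty$ yields a nonzero entire function of finite exponential type, bounded on $\mathbb{R}$, vanishing on $\Lambda$; equivalently there is a nonzero measure $\mu$ supported on some interval $[-\sigma,\sigma]$ with $\widehat{\mu}|_\Lambda=0$. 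I would then set $\widehat\phi=\widehat\mu/\widehat{\widetilde f}$, so that $F_\phi$ has compactly supported spectrum and $F_\phi|_\Lambda=0$, with $\phi\neq 0$ in $BMO$. The delicate point, which I expect to be the main obstacle, is precisely this division by $\widehat f$: although $\widehat f\neq 0$ a.e., it decays at infinity and vanishes at the origin, so one must verify that the quotient remains a genuine $BMO$-functional and is nonzero modulo constants. I would tame this by choosing $\mu$ with extra smoothness vanishing to sufficient order where $\widehat f$ does (in particular at $0$), or by selecting the spectral band so that $\widehat f$ is bounded below there, invoking the Beurling--Malliavin multiplier to absorb the remaining growth.

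For sufficiency I would construct a generator directly, which is where the molecular machinery enters. Since $D_{BM}(\Lambda)=\infty$, the exponentials $\{e^{i\lambda\xi}\}_{\lambda\in\Lambda}$ are complete in $L^2(I)$ and in $C(I)$ for \emph{every} bounded interval $I$, and $\Lambda$ is a uniqueness set for all Bernstein spaces. Fixing a countable family of molecules dense in $H^1$, I would build $f$ as a suitably convergent series of scaled and modulated molecules so arranged that finite $\Lambda$-combinations of $\tau_\lambda f$ approximate each molecule: the completeness of $\{e^{i\lambda\xi}\}$ on each frequency interval supplies the approximation scale by scale, while the molecular norm controls the $H^1$ error of each approximation and guarantees the summability needed for $f\in H^1$. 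Verifying that the constructed $f$ genuinely lies in $H^1$ --- the cancellation $\int f=0$ together with the $H^1$-bound on the defining series --- is exactly the step for which molecules, rather than bare atoms, are essential.

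The technical heart of the argument, in both directions, is reconciling the rigidity of $H^1$ with the Beurling--Malliavin transfer. On the dual side $BMO$ is far larger than $L^\infty$, so one must ensure the manufactured annihilators are nonzero modulo constants and, conversely, that no exotic $BMO$ class evades the uniqueness conclusion; on the generator side the forced zero $\widehat f(0)=0$ obstructs naive inversion and must be made compatible with the defining cancellation of $H^1$. I anticipate that the crux is showing that the bounded functions $F_\phi=\widetilde f\ast\phi$ which can actually arise are confined to a class of finite logarithmic integral, so that $D_{BM}(\Lambda)=\infty$ truly forces $F_\phi\equiv 0$ and hence $\phi$ constant.
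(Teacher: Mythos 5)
Your overall skeleton does coincide with the paper's (duality plus Beurling--Malliavin for necessity; a Fourier-side constructive approximation with molecular control for sufficiency), but there is a genuine gap in your necessity argument. You reduce only to the case $\hat f \neq 0$ a.e., and then the division $\hat\phi=\hat\mu/\hat{\tilde f}$ --- which you yourself flag as the main obstacle --- cannot be completed by either of your proposed fixes. The spectral band carrying the annihilating object cannot be chosen short: a nonzero $g\in L^2(\mathbb{R})$ with $g|_\Lambda=0$ and $\mathrm{supp}(\hat g)$ contained in an interval $I$ can exist only when $|I|\geq 2\pi D_{BM}(\Lambda)$, because $E(\Lambda)$ is dense in $L^2(I)$ for shorter intervals. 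So take, for instance, $\Lambda=\mathbb{Z}$ and $\hat f(\zeta)=e^{-\zeta^2}h(\zeta)$ with $h$ smooth, $\tfrac12$-periodic, vanishing to infinite order exactly on $\tfrac12\mathbb{Z}$: then $f\in H^1(\mathbb{R})$, $\hat f\neq 0$ a.e., yet every admissible band (length $\geq 2\pi$) contains zeros of $\hat f$, so no band exists on which $\hat f$ is bounded below; and demanding that $\hat\mu$ vanish ``to sufficient order'' where $\hat f$ does is hopeless, since $\hat f$ vanishes faster than any power there and you have no mechanism forcing a function that vanishes on $\Lambda$ and has compactly supported spectrum to match infinite-order vanishing at prescribed points. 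The observation you are missing dissolves the problem: if $\hat f(\zeta_0)=0$ for even one $\zeta_0\neq 0$, then $e^{i\zeta_0 x}\in L^{\infty}(\mathbb{R})\subset \mathrm{BMO}(\mathbb{R})$ already annihilates every translate of $f$, so such $f$ is settled trivially. One may therefore assume $\hat f$ is pointwise nonvanishing on $\mathbb{R}\setminus\{0\}$ --- this is precisely the role of \autoref{WTTH1} in the paper --- after which one modulates $g$ so that $\mathrm{supp}(\hat g)$ is compact and avoids $0$; there $\hat f$ is bounded below by continuity and compactness, the quotient is continuous with compact support, and its inverse transform is an $L^\infty$ annihilator. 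That is exactly the paper's proof.

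In the sufficiency direction your constructive plan has the right shape but is missing the bridging norm, and your fallback idea fails. Completeness of $\{e^{i\lambda\zeta}\}_{\lambda\in\Lambda}$ in $L^2(I)$ or $C(I)$ does not control the $H^1$ error: writing the error as $(G-pF)^{\vee}$ with $p\in E(\Lambda)$ and $F=\hat f$, its molecular norm as a $(2,\tfrac12)$-molecule is comparable to $\left(\|G-pF\|_{2}\,\|(G-pF)'\|_{2}\right)^{1/2}$, so the approximation must be carried out in the Sobolev norm $\|\cdot\|_{L^2}+\|(\cdot)'\|_{L^2}$; this is why the paper combines the $W(I)$-density form of Beurling--Malliavin with \autoref{W0densityX}, and why it constructs one fixed $F$ with $F(0)=0$ and $F>0$ elsewhere (a series of tent functions with decaying coefficients), so that every quotient $G/F$ lies in the relevant Sobolev space and the cancellation condition of a molecule is met. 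Finally, your closing suggestion --- that any annihilator gives $F_\phi=\tilde f * \phi$ of ``finite logarithmic integral'' which $D_{BM}(\Lambda)=\infty$ then forces to vanish --- does not work as stated: for a generator $\hat f$ must be nonvanishing off $0$, hence $F_\phi$ is not band-limited, and Beurling--Malliavin uniqueness does not apply to it; the paper avoids this entirely by arguing constructively rather than through the dual.
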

Following \cite{olevskiiulanovskii16}, we define the (exterior) Beurling-Malliavin Density $D_{BM}(\Lambda)$ for any discrete set $\Lambda \subset \mathbb{R}$ as the supremum of all the numbers $D>0$ such that there is a substantial sequence of intervals $\{I_k\}$ satisfying $$\#(\Lambda \cap I_k) \geq D|I_k|, \hspace{0.2in} k \in \mathbb{N}, \quad \forall k\in \mathbb{N}.$$
Here, a sequence of intervals $I_k, k \in \mathbb{N}$, which belong to the positive or negative half-axis, is called substantial if the intervals are disjoint, $|I_k|>1$ for all $ k \in \mathbb{N}$, and
$$\sum\limits_{k=1}^{\infty}\left( \frac{|I_k|}{\text{dist}(I_k,0)}  \right)^2 = \infty.$$

As a  consequence, uniformly discrete translates of any single function can never be complete in both $H^1(\mathbb{R})$ and $L^1(\mathbb{R})$. However, similar to $L^1(\mathbb{R})$ (\cite{olevskiiulanovskii06}), there exists a pair of functions in $H^1(\mathbb{R})$ such that their translates from a small perturbation of integers are complete:
\begin{theorem}\label{2generator}
Let $\Lambda \subset \mathbb{R}$ be an exponentially small perturbation of integers, i.e., $\Lambda = \{ n+r_n\}_{n \in \mathbb{Z}}$ where
\begin{equation}\label{perturbedintegers}
\begin{split}
    \Lambda = \{ n+r_n\}_{n \in \mathbb{Z}} \hspace{1.8in}\\
    0 < |r_n| \leq \gamma^{|n|} \text{ for all } n \in \mathbb{Z} \text{ and some } 0 < \gamma < 1. 
\end{split}
\end{equation}
Then there exists a pair of functions $f_1$ and $f_2$ in $H^1(\mathbb{R})$ such that $\text{span}\{ \tau_{\lambda}f_1, \tau_{\nu}f_2\}_{\lambda, \nu \in \Lambda}$ is dense in $H^1(\mathbb{R})$.  
\end{theorem}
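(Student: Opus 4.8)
The plan is to argue by duality. Since $\bigl(H^1(\mathbb{R})\bigr)^{*}=\mathrm{BMO}(\mathbb{R})$, the span of $\{\tau_\lambda f_1,\tau_\nu f_2\}_{\lambda,\nu\in\Lambda}$ is dense in $H^1(\mathbb{R})$ if and only if the only $g\in\mathrm{BMO}(\mathbb{R})$ with $\langle \tau_\lambda f_1,g\rangle=\langle\tau_\nu f_2,g\rangle=0$ for every $\lambda,\nu\in\Lambda$ is the zero functional (i.e.\ a constant). For a fixed $g$ I would introduce the two correlation functions $\Phi_j(t):=\langle\tau_t f_j,g\rangle=\int_{\mathbb{R}}f_j(x-t)\,\overline{g(x)}\,dx$, $j=1,2$. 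Because $\tau_t f_j$ ranges over $H^1$ with $\|\tau_t f_j\|_{H^1}=\|f_j\|_{H^1}$ and translation is continuous in $H^1$, each $\Phi_j$ is a bounded continuous function on $\mathbb{R}$, and the annihilation hypothesis becomes simply $\Phi_1|_\Lambda=\Phi_2|_\Lambda=0$. The whole problem is thus reduced to \emph{choosing} $f_1,f_2\in H^1(\mathbb{R})$ so that the two conditions $\Phi_1(\lambda)=\Phi_2(\lambda)=0$, $\lambda\in\Lambda$, force $g$ to be constant.

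Next I would construct the generators. I would take $f_1,f_2$ with exponential decay, say $|f_j(x)|\lesssim e^{-a|x|}$, together with the cancellation $\int f_j=0$ and a mild H\"older/decay condition, so that each $f_j$ is an $H^1$-molecule and in particular lies in $H^1(\mathbb{R})$; the molecule estimates are exactly what certifies membership and give quantitative control of $\|f_j\|_{H^1}$. The transforms $\hat f_1,\hat f_2$ will be designed to have no common zero and to jointly cover all frequencies (so that no nontrivial $g$ can be "invisible" to both generators), while the exponential decay of $f_j$ guarantees that each $\Phi_j$ extends holomorphically to a horizontal strip $|\mathrm{Im}\,z|<a$; here one must check that the logarithmic, sub-exponential growth allowed for a $\mathrm{BMO}$ function is dominated by the decay of $f_j$, so that the defining integral and its complex continuation converge absolutely.

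The heart of the matter is to exploit that $\Lambda$ is an \emph{exponentially small} perturbation of $\mathbb{Z}$. Writing $\lambda_n=n+r_n$ with $0<|r_n|\le\gamma^{|n|}$ and Taylor expanding,
\[
0=\Phi_j(\lambda_n)=\Phi_j(n)+r_n\,\Phi_j'(n)+\tfrac{1}{2}r_n^2\,\Phi_j''(n)+\cdots ,
\]
so that vanishing on $\Lambda$ couples the values $\Phi_j(n)$ to the derivatives $\Phi_j'(n),\Phi_j''(n),\dots$ with exponentially small weights, each node contributing genuine derivative information precisely because $r_n\neq0$. Via a Poisson-summation (periodization) argument the integer data $\{\Phi_j(n)\}_n$ translate into the fiberwise relations $\sum_{k}\hat f_j(\xi+k)\,\overline{\hat g(\xi+k)}=0$, which for two generators against the exact lattice would leave an infinite-dimensional solution space in each fiber $\{\hat g(\xi+k)\}_k$. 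The perturbation is what removes this deficiency: the exponentially small corrections expose the derivative data and, because $\gamma<1$, the associated series converge and generate a quasi-analytic family of constraints whose only joint solution is $\hat g\equiv0$. Combining this with the no-common-zero design of $\hat f_1,\hat f_2$ yields $g=\text{const}$, i.e.\ the annihilator is trivial and the translates are complete.

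I expect the decisive obstacle to be precisely this last step: converting "vanishing on the exponentially perturbed integers" into a genuine uniqueness theorem for $g$. One must show that two generators, which against the exact lattice $\mathbb{Z}$ supply only two linear equations per frequency fiber and can never be complete, become sufficient once the lattice is perturbed, and this hinges on delicate quasi-analytic and exponential-type estimates controlling the tails of the Taylor--perturbation series together with the growth of $\Phi_j$ in the strip. A secondary but necessary technical point is to reconcile all interchanges of summation and integration with the fact that $g$ is only in $\mathrm{BMO}$, not $L^\infty$, which is where the exponential decay of the molecules $f_1,f_2$ must be spent.
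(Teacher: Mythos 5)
Your overall skeleton coincides with the paper's: pass to an annihilator $g \in \mathrm{BMO}(\mathbb{R})$, form the correlation functions $\Phi_j(t)=\langle \tau_t f_j, g\rangle$, and design $f_1,f_2$ so that $\Phi_1|_\Lambda=\Phi_2|_\Lambda=0$ forces $g$ to be constant. But the step you yourself call ``the decisive obstacle'' is exactly the content of the proof, and what you propose in its place would not work. The paper closes this gap by invoking a specific uniqueness theorem (Proposition 12.21 in Olevskii--Ulanovskii): if $\phi$ is \emph{entire}, bounded on every horizontal line, and its spectrum lies in $I+2\pi\mathbb{Z}$ for a closed interval $I$ with $|I|<2\pi$, then $\phi|_\Lambda=0$ implies $\phi\equiv 0$. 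The generators are engineered precisely so that each $\Phi_j$ lands in this class: $\hat f_j=F_j$ is a smooth function \emph{supported on the periodic set} $\overline{J_j}+2\pi\mathbb{Z}$ (an interval plus $2\pi\mathbb{Z}$, with $|J_j|<2\pi$), built from translated bumps with coefficients $c_n=e^{-|n|}$, so that $F_j$ decays faster than every exponential; this super-exponential decay of the \emph{Fourier transform} is what makes $f_j$, and hence $\Phi_j$, entire and bounded on horizontal lines, while the periodic support yields the spectral condition. Your design points the analyticity in the wrong direction: exponential decay of $f_j$ in physical space gives analyticity of $\hat f_j$, not of $\Phi_j$; a holomorphic continuation of $\Phi_j(z)=\int f_j(x-z)g(x)\,dx$ requires evaluating $f_j$ at complex arguments, i.e.\ rapid decay of $\hat f_j$, which your hypotheses do not provide. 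Worse, the periodic-gap condition on the spectrum is not a technicality that ``Taylor expansion plus Poisson summation plus quasi-analyticity'' can replace: without it the desired uniqueness statement is \emph{false}. Since $\sum_n|r_n|<\infty$ under the hypothesis $0<|r_n|\le\gamma^{|n|}$, the canonical (sine-type) product with zero set exactly $\Lambda$ is a nonzero entire function of exponential type $\pi$, bounded on every horizontal line, with spectrum contained in the single interval $[-\pi,\pi]$; so vanishing on $\Lambda$ together with boundedness on lines and spectrum in an interval of length $2\pi$ does not force triviality. Only the lacunary structure $I+2\pi\mathbb{Z}$ with $|I|$ strictly less than $2\pi$ rules this out, and your construction imposes no such structure on $\hat f_1,\hat f_2$.

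Two further concrete errors. First, your requirement that $\hat f_1,\hat f_2$ have ``no common zero'' is unattainable in $H^1(\mathbb{R})$: every $f\in H^1(\mathbb{R})$ satisfies $\hat f(0)=0$, so the origin is an unavoidable common zero. This is precisely why the correct conclusion of the duality argument is $\mathrm{supp}(\hat g)\subseteq\{0\}$, hence $g$ constant (constants annihilate $H^1$), rather than $\hat g\equiv 0$; the paper's substitute for your ``jointly cover all frequencies'' is the covering condition $[-\pi,\pi]\subset I_1\cup I_2$ on the two spectra, which forces the zero sets of $F_1(-\cdot)$ and $F_2(-\cdot)$ to meet only at the origin. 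Second, your molecule estimates certify membership of $f_j$ in $H^1(\mathbb{R})$ but contribute nothing to the uniqueness step; in the paper, membership follows simply because $F_j$ can be taken Schwartz with $F_j(0)=0$, and all of the actual work is in arranging the spectral gap structure so that the Olevskii--Ulanovskii theorem applies.
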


The organization of this note is as follows: in \autoref{preliminaries} we give some preliminary results.  In \autoref{AnalogueOfWTTForH1}, we provide a characterization of functions whose all translates are complete in $H^1(\mathbb{R})$. In \autoref{CharacterizationOfGeneratingSets} we prove \autoref{generatingsetch}, and in \autoref{APairOfGenerators}, we prove \autoref{2generator}. 

\section{Preliminaries}\label{preliminaries}

We define the Hardy space $H^1(\mathbb{R})$ of functions on $\mathbb{R}$ as:
$$ H^1(\mathbb{R}) := \{f \in L^1(\mathbb{R}) \text{ }|\text{ } Hf \in L^1(\mathbb{R})\} $$
$$
\text{ with } \|f\|_{H^1}:=\|f\|_{L^1}+\|Hf\|_{L^1}
$$
where $Hf$ denote the Hilbert transform of $f$ which is a multiplier defined by 
$$\widehat{Hf}(\zeta)   = -i\frac{\zeta}{|\zeta|} \hat{f}(\zeta) \text{ and } $$
$$\hat{f}(\zeta)=\int\limits_{\mathbb{R}}f(x)e^{ix\zeta}dx.$$
It follows that $H^1(\mathbb{R})$ is a Banach algebra under convolution. Moreover, $L^1(\mathbb{R})*H^1(\mathbb{R}) \subseteq H^1(\mathbb{R})$.\\

$H^1(\mathbb{R})$ has an approximate identity, i.e., there exists $\{v_n\}_{n \in \mathbb{N}} \subset H^1(\mathbb{R})$ such that $v_n * f \xrightarrow{\| \cdot \|_{H^1}} f$ as $n \to \infty$ for all $f \in H^1(\mathbb{R})$. However, these are unbounded as shown in \cite{johnsonwarner10}.\\

We can decompose elements of $H^1(\mathbb{R})$ into an $\ell^1$ sum of  molecules. Suppose that $1 < q \leq \infty$, $a_0>0$ and $b_0=1-\frac{1}{q}+a_0$. A function $m \in L^q(\mathbb{R})$ is said to be a $(q,a_0)$-molecule centered at $x_0 \in \mathbb{R}$ if it satisfies
\begin{enumerate}
    \item $|x|^{b_0}m(x) \in L^q(\mathbb{R})$,
    \item $\mathcal{N}_q(m):=\|m\|_q^{a_0/b_0}\|m(\cdot)|\cdot-x_0|^{b_0}\|_q^{1-(a_0/b_0)} < \infty$ and,
    \item $\int\limits_{\mathbb{R}} m(x)dx=0$.
\end{enumerate}
The number $\mathcal{N}_q(m)$ is called the molecular norm of $m$. The molecular characterization of $H^1(\mathbb{R})$ is given by the following result:
\begin{theorem}\label{molecularcharH1}\cite[p.~84--85]{shanzhen95}
Let $1 < q \leq \infty$ and $a_0>0$. Then every $(q,a_0)$-molecule belongs to $H^1(\mathbb{R})$ and $\|m\|_{H^1} \leq C\mathcal{N}_q(m)$ where $C$ is a constant independent of $m$. A function $f$ belongs to $H^1(\mathbb{R})$ if and only if $f$ has a decomposition of the form 
$$f=\sum\limits_{k=1}^{\infty} c_km_k,  $$ 
where each $m_k$ is a $(q,a_0)$-molecule with uniformly bounded molecular norm, $\sum\limits_{k=1}^{\infty}|c_k|< \infty$, and the series converges in the tempered distribution topology. Moreover,
$$\inf\left\{ \sum\limits_{k=1}^{\infty}|c_k| : f=\sum\limits_{k=1}^{\infty} c_km_k \right\} \text{ is equivalent to } \|f\|_{H^1}.$$
\end{theorem}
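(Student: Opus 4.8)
The plan is to deduce the molecular characterization from the atomic decomposition of $H^1(\mathbb{R})$, which I take as the basic structural theorem: $f \in H^1(\mathbb{R})$ if and only if $f = \sum_k c_k a_k$, where each $a_k$ is a $(q,a_0)$-atom, i.e.\ a function supported in an interval $B_k$ with $\|a_k\|_q \le |B_k|^{1/q - 1}$ and $\int a_k = 0$, and $\sum_k |c_k| < \infty$, with $\inf \sum_k |c_k| \approx \|f\|_{H^1}$. Granting this, the theorem reduces to two statements: (A) every $(q,a_0)$-molecule lies in $H^1(\mathbb{R})$ with $\|m\|_{H^1} \le C\,\mathcal{N}_q(m)$, and (B) every $(q,a_0)$-atom is a $(q,a_0)$-molecule whose molecular norm is bounded by an absolute constant.

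For (A) I would use translation invariance of $H^1(\mathbb{R})$ to assume $m$ is centered at $x_0 = 0$, and then exhibit $m$ as an absolutely convergent sum of atoms. Fix a radius $r > 0$ to be optimized, and decompose $\mathbb{R}$ into the ball $R_0 = \{|x| \le r\}$ and the dyadic annuli $R_j = \{2^{j-1} r < |x| \le 2^j r\}$, $j \ge 1$, writing $m_j = m\chi_{R_j}$. Each $m_j$ is supported in an interval of length comparable to $2^j r$ but need not integrate to zero, so I would split off its mean: $b_j = m_j - (|R_j|^{-1}\int m_j)\chi_{R_j}$ has vanishing integral and, after dividing by the factor $\|b_j\|_q |R_j|^{1-1/q}$, is a constant $c_j^{(1)}$ times an atom. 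The residual masses $\mu_j = \int m_j$ satisfy $\sum_j \mu_j = \int m = 0$ by the cancellation condition, so their tails $N_j = \sum_{i > j}\mu_i$ telescope into mean-zero functions $N_j(|R_{j+1}|^{-1}\chi_{R_{j+1}} - |R_j|^{-1}\chi_{R_j})$, each a constant $c_j^{(2)}$ times an atom.

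The heart of the argument is estimating $\sum_j(|c_j^{(1)}| + |c_j^{(2)}|)$. On each annulus with $j \ge 1$ the decay hypothesis gives $\|m_j\|_q \le (2^{j-1}r)^{-b_0}\||x|^{b_0}m\|_q$, and combining this with Hölder's inequality (to pass between $\int_{R_j}|m|$, $\|m_j\|_q$, and powers of $|R_j|$) makes each coefficient geometric in $j$, since $1 - 1/q - b_0 = -a_0 < 0$; the inner ball $R_0$ is instead controlled directly by $\|m\|_q$. Summing the geometric series leaves two competing terms, increasing and decreasing in $r$, of the shape $r^{1-1/q}\|m\|_q$ and $r^{-a_0}\||x|^{b_0}m\|_q$. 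Balancing them by the choice $r \approx (\||x|^{b_0}m\|_q / \|m\|_q)^{1/b_0}$ — legitimate because $b_0 - a_0 = 1 - 1/q$ — produces precisely $\mathcal{N}_q(m)$, so $\sum_j(|c_j^{(1)}| + |c_j^{(2)}|) \le C\,\mathcal{N}_q(m)$. Since each atom has $H^1$-norm bounded by an absolute constant and $H^1(\mathbb{R})$ is complete, the atomic series converges in $H^1$ and yields $m \in H^1(\mathbb{R})$ with $\|m\|_{H^1} \le C\,\mathcal{N}_q(m)$.

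Statement (B) is a direct computation: a $(q,a_0)$-atom $a$ supported in $B$ already satisfies $\int a = 0$ and $\|a\|_q \le |B|^{1/q-1}$, while $\||\cdot - x_0|^{b_0}a\|_q \le |B|^{b_0}\|a\|_q$ since $a$ is supported near $x_0$, so $\mathcal{N}_q(a) \le C$ uniformly. With (A) and (B) established the equivalence follows: the \emph{if} direction of the decomposition comes from (A) together with the triangle inequality (partial sums are Cauchy in $H^1$), and the \emph{only if} direction is the atomic decomposition with each atom reinterpreted as a molecule via (B). For the norm equivalence, $\|f\|_{H^1} \le \sum_k |c_k|\,\|m_k\|_{H^1} \le C(\sup_k \mathcal{N}_q(m_k))\sum_k |c_k|$ gives one inequality from (A), and the reverse inequality $\inf \sum_k |c_k| \le C\|f\|_{H^1}$ is inherited from the atomic estimate through (B). Convergence in the tempered-distribution topology is automatic, since $H^1(\mathbb{R}) \hookrightarrow \mathcal{S}'(\mathbb{R})$ continuously. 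I expect the main obstacle to be the quantitative bookkeeping in (A): tracking the exponents $a_0$ and $b_0 = 1 - 1/q + a_0$ through Hölder's inequality on each annulus so that the two terms in $\mathcal{N}_q(m)$ emerge with the correct powers of $r$, and confirming that the balancing choice of $r$ renders the atomic coefficients summable.
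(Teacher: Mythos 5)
Your proposal is correct, and it is essentially the proof of the cited source: the paper itself does not prove this theorem but imports it from Lu \cite[p.~84--85]{shanzhen95}, where the argument is exactly yours --- dyadic annular decomposition of the molecule about its center, mean-subtraction on each annulus plus Abel/telescoping of the residual masses $\mu_j$ using $\int m = 0$, geometric decay of the coefficients from $1-\frac{1}{q}-b_0=-a_0<0$, and the balancing radius $r \approx \bigl(\||x|^{b_0}m\|_q/\|m\|_q\bigr)^{1/b_0}$, which (since $b_0-a_0=1-\frac{1}{q}$) makes the two competing terms collapse to precisely $\mathcal{N}_q(m)$. Your converse direction, reducing to the atomic decomposition via the uniform bound $\mathcal{N}_q(a)\leq \|a\|_q|B|^{b_0-a_0}\leq 1$ for atoms, is likewise the standard route, so the proposal is sound and matches the reference's proof.
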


A simple consequence is that $L^q(\mathbb{R}) \cap H^1(\mathbb{R})$ is dense in $H^1(\mathbb{R})$ for any $1<q \leq \infty$. It is also easy to verify that the set of all $f \in L^1(\mathbb{R})$ such that $\hat{f}$ has compact support away from $0$ is dense in $H^1(\mathbb{R})$.\\

The dual space of $H^1(\mathbb{R})$ is identified with the space of functions of bounded mean oscillations BMO$(\mathbb{R})$ defined by:
$$\text{BMO}(\mathbb{R})=\left\{  f\in L^1_{\text{loc}}(\mathbb{R}) :  \sup\limits_{I : I \text{ is an interval}}\frac{1}{|I|}\int\limits_I|f(y)-f_I|dy  < \infty  \right\},$$ where $|I|$ denote the length of $I$ and $\displaystyle f_I = \frac{1}{|I|}\int\limits_I f(x) ~dx$, {with }\\
 $$\|f\|_{\text{BMO}}:=\sup\limits_{I : I \text{ is an interval}}\frac{1}{|I|}\int\limits_I|f(y)-f_I|dy.$$

Note that $\|f\|_{\text{BMO}}$ is a semi-norm with $\|f\|_{\text{BMO}}=0$ if and only if $f$ is a constant.\\
Clearly,  $L^{\infty}(\mathbb{R}) \subset \text{ BMO}(\mathbb{R})$. In fact, $L^{\infty}(\mathbb{R})$ is a proper subspace of BMO$(\mathbb{R})$. For example, the function $\log|x|$ is in $ \text{BMO}(\mathbb{R})$ but not in $L^{\infty}(\mathbb{R})$. However, we have \cite[p.~25]{uchiyama01}
\begin{equation}\label{bmogrowth}
    \int\limits_{\mathbb{R}}\frac{|f(x)|}{(1+|x|)^{2}}dx< \infty \text{ for all } f \in \text{BMO}(\mathbb{R}),
\end{equation}
which shows BMO$(\mathbb{R})$ is contained in the space of tempered distributions $\mathcal{S}^{'}(\mathbb{R})$.

We can also approximate the dual action of any function in BMO$(\mathbb{R})$ by $L^{\infty}$ functions, i.e., $L^{\infty}(\mathbb{R})$ is weak* dense in BMO$(\mathbb{R})$:
\begin{theorem}\label{bmodualapprox}\cite[p.~34]{uchiyama01}
For $h \in $ BMO$(\mathbb{R})$ and $r>0$, define
$$h_r(x)=h(x)/\max \{1,|h(x)|/r\}.$$
Then for any $f \in H^1(\mathbb{R})$, we get
\begin{equation}
    \begin{split}
        \|h_r\|_{L^{\infty}} \leq & r\\
        \|h_r\|_{\text{BMO}} \leq & \|h\|_{\text{BMO}}\\
        \langle f,h \rangle_{\text{BMO}} = & \lim\limits_{r \rightarrow \infty} \int\limits_{\mathbb{R}} f(x) h_r(x) dx.
    \end{split}
\end{equation}
\end{theorem}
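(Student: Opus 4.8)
The statement concerns the clipping $h_r=T_r\circ h$, where $T_r(t)=t/\max\{1,|t|/r\}$ sends $t$ to itself when $|t|\le r$ and to $r\,\operatorname{sgn}(t)$ when $|t|>r$; thus $T_r$ is the truncation onto $[-r,r]$, it is a contraction, $|T_r(s)-T_r(t)|\le|s-t|$, and it satisfies $|T_r(t)|\le|t|$. The first assertion $\|h_r\|_{L^\infty}\le r$ is immediate from this case distinction, which also yields the pointwise bound $|h_r|\le|h|$ that I will use in the last step.

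For the BMO bound I would write $\Omega(g,I)=\frac1{|I|}\int_I|g-g_I|\,dx$ for the mean oscillation and first record the elementary identity $\Omega(g,I)=\frac{2}{|I|}\int_I (g-g_I)^+\,dx$, valid because $\int_I(g-g_I)\,dx=0$ forces the positive and negative parts of $g-g_I$ to have equal integral over $I$. It suffices to treat one-sided truncation $g=\min(h,r)$, since $T_r=\max(\cdot,-r)\circ\min(\cdot,r)$ and $\Omega(\cdot,I)$ is invariant under $h\mapsto-h$ and $h\mapsto h+\mathrm{const}$, so a contraction bound for $\min(\cdot,r)$ yields one for $\max(\cdot,-r)$ and hence for $T_r$ by composition. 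Fixing $I$ and splitting it into $A=\{h\le r\}$ and $B=\{h>r\}$, one has $g_I=h_I-\delta$ with $\delta=\frac1{|I|}\int_B(h-r)\ge0$; evaluating $\frac{2}{|I|}\int_I(g-g_I)^+$ and $\frac{2}{|I|}\int_I(h-h_I)^+$ over $A$ and $B$ separately, a short computation shows that the extra mass the shift $\delta$ adds on $A$ is exactly compensated by the mass lost on $B$, giving $\Omega(g,I)\le\Omega(h,I)$. Taking the supremum over $I$ yields $\|h_r\|_{\mathrm{BMO}}\le\|h\|_{\mathrm{BMO}}$.

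For the third identity I would argue by uniform boundedness together with convergence on a dense set. Since every $f\in H^1(\mathbb{R})$ has $\int f=0$ and each $h_r\in L^\infty(\mathbb{R})\subset\mathrm{BMO}(\mathbb{R})$, the map $\Phi_r\colon f\mapsto\int_{\mathbb R}f h_r$ is a bounded functional with $|\Phi_r(f)|=|\langle f,h_r\rangle_{\mathrm{BMO}}|\le C\|f\|_{H^1}\|h_r\|_{\mathrm{BMO}}\le C\|f\|_{H^1}\|h\|_{\mathrm{BMO}}$, the bound being uniform in $r$ by the second assertion. On the dense subspace of those $f$ whose Fourier transform is smooth, compactly supported, and supported away from $0$, every such $f$ is a Schwartz function with $\int f=0$; by \eqref{bmogrowth} we then have $\int|f|\,|h|<\infty$, the pairing $\langle f,h\rangle_{\mathrm{BMO}}$ is realized by the absolutely convergent integral $\int f h$, and since $h_r\to h$ pointwise with $|f h_r|\le|f|\,|h|\in L^1(\mathbb{R})$, dominated convergence gives $\Phi_r(f)\to\int f h=\langle f,h\rangle_{\mathrm{BMO}}$. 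For arbitrary $f\in H^1(\mathbb{R})$ and $\varepsilon>0$, choose $g$ in this dense subspace with $\|f-g\|_{H^1}<\varepsilon$; the uniform bound controls both $|\Phi_r(f)-\Phi_r(g)|$ and $|\langle f,h\rangle-\langle g,h\rangle|$ by $C\varepsilon\|h\|_{\mathrm{BMO}}$, while the convergence on $g$ handles the middle term, so $\Phi_r(f)\to\langle f,h\rangle_{\mathrm{BMO}}$.

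The main obstacle is the sharp contraction estimate of the second assertion: the naive double-integral bound $\Omega(g,I)\le\frac1{|I|^2}\iint_{I\times I}|g(x)-g(y)|\,dx\,dy$ combined with the contraction property only produces a factor $2$, and extracting the constant $1$ requires the oscillation identity and the exact $A$--$B$ cancellation described above. Once that uniform bound is in hand, the third assertion follows by the soft functional-analytic argument just sketched, the only point needing care being the identification of $\langle f,h\rangle_{\mathrm{BMO}}$ with $\int f h$ on the dense class, where the mean-zero condition $\int f=0$ is essential.
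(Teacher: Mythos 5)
The paper offers no proof of this statement to compare against: it is imported verbatim from Uchiyama's book \cite[p.~34]{uchiyama01}. So your proposal has to be judged on its own terms, and on those terms its architecture is sound and is (presumably) the standard argument. The first assertion is indeed immediate. For the second, the reduction $T_r=\max(\cdot,-r)\circ\min(\cdot,r)$ together with the identity $\Omega(g,I)=\frac{2}{|I|}\int_I(g-g_I)^+$ does work, and the ``short computation'' you defer genuinely closes: with $g=\min(h,r)$, $B=\{h>r\}\cap I$, $\delta=\frac{1}{|I|}\int_B(h-r)$, note first that $g_I\le r$ always, and $g_I=r$ forces $g\equiv r$ on $I$, a trivial case; when $g_I<r$ one has $r-h_I+\delta=r-g_I>0$, so the gain on $A$ is
$$\int_A\bigl[(h-h_I+\delta)^+-(h-h_I)^+\bigr]\,dx\le\delta|A|,$$
while the loss on $B$ is
$$\int_B\bigl[(h-h_I)^+-(r-h_I+\delta)^+\bigr]\,dx\ \ge\ \int_B(h-r-\delta)\,dx=\delta|I|-\delta|B|=\delta|A|,$$
whence $\Omega(g,I)\le\Omega(h,I)$. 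So the sharp constant $1$, which you correctly identify as the crux, is attainable by exactly the route you describe.

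The one place where your argument has a real gap is the third assertion, in the two identifications of abstract pairings with integrals that you take for free. First, the uniform bound $|\Phi_r(f)|\le C\|f\|_{H^1}\|h\|_{\mathrm{BMO}}$ requires knowing that for a \emph{bounded} function $b$ the $H^1$--BMO pairing with \emph{every} $f\in H^1$ equals the absolutely convergent integral $\int fb$; the trivial estimate $|\int fh_r|\le\|f\|_{L^1}\|h_r\|_\infty\le r\|f\|_{H^1}$ is useless here since it is not uniform in $r$. This identification is true, but needs the atomic decomposition: $f=\sum_k c_ka_k$ converges in $L^1$ as well as in $H^1$, and on finite atomic sums the pairing is the integral by the very construction of the duality. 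Second, on your dense class the equality $\langle f,h\rangle_{\mathrm{BMO}}=\int fh$ for Schwartz $f$ of mean zero is likewise a lemma, not a definition: the pairing is defined by continuous extension from compactly supported mean-zero functions, so you owe an approximation argument showing that extension is computed by the absolutely convergent integral. Since the theorem being proved is itself of the form ``the abstract pairing is a limit of concrete integrals,'' leaving these two steps implicit risks circularity. Both are fixable; for the second, the cleaner choice of dense class is compactly supported bounded mean-zero functions, for which $\int|fh|<\infty$ trivially and the pairing is the integral by construction, making the appeal to \eqref{bmogrowth} unnecessary.
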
 

We refer to \cite{shanzhen95} and \cite{uchiyama01} for the general theory related to the Hardy spaces.

\section{Analogue of Wiener-Tauberian Theorem for $H^1(\mathbb{R})$}\label{AnalogueOfWTTForH1}

Johnson and Warner \cite{johnsonwarner10} identified the maximal ideal space $\Delta$ (all continuous homomorphisms
of $H^1(\mathbb{R})$ onto the complex numbers) of $H^1(\mathbb{R})$ with $\mathbb{R}\setminus \{0\}$. Further, they mentioned that all translates of a function in $H^1(\mathbb{R})$ are complete if and only if its Fourier transform never vanishes on $\mathbb{R} \backslash \{0\}$. However, they  deferred  the proof of the statement to another paper which we could not find in the literature. For the sake of exposition, we shall provide a proof in this note. Our proof uses Banach Algebra techniques whose basic terminology can be found in \cite{loomis1953}. \\

The following result is proved in \cite{johnsonwarner10}:\\

\begin{lemma}\label{maximalideal}\cite{johnsonwarner10}
The maximal ideal space $\Delta$ of $H^1(\mathbb{R})$ can be identified with $\mathbb{R} \backslash \{0\}$ and the Gelfand transform is equivalent to the Fourier transform restricted to $\mathbb{R} \backslash \{0\}$.
\end{lemma}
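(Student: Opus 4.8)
The plan is to realize $H^1(\mathbb{R})$ as a convolution ideal of $L^1(\mathbb{R})$ and to transport the classical Gelfand theory of $L^1(\mathbb{R})$ across this inclusion. Recall that the character space of $L^1(\mathbb{R})$ is exactly $\mathbb{R}$, every nonzero homomorphism having the form $g \mapsto \hat{g}(\zeta)$ for a unique $\zeta \in \mathbb{R}$, with the Gelfand transform coinciding with the Fourier transform. Since $H^1(\mathbb{R}) \subseteq L^1(\mathbb{R})$ with $\|f\|_{L^1} \leq \|f\|_{H^1}$, and since $L^1(\mathbb{R}) * H^1(\mathbb{R}) \subseteq H^1(\mathbb{R})$ with a norm estimate $\|g*f\|_{H^1} \leq C \|g\|_{L^1}\|f\|_{H^1}$ (which follows from the inclusion by the closed graph theorem), the space $H^1(\mathbb{R})$ is a Banach $L^1$-module and a convolution ideal of $L^1(\mathbb{R})$. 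I would isolate these two structural facts first.

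Next I would record two elementary observations. Each $\zeta \in \mathbb{R}\setminus\{0\}$ defines a nonzero continuous homomorphism $\chi_\zeta : f \mapsto \hat{f}(\zeta)$ on $H^1(\mathbb{R})$: multiplicativity is the convolution theorem $\widehat{f*g} = \hat{f}\,\hat{g}$, boundedness follows from $|\hat{f}(\zeta)| \leq \|f\|_{L^1} \leq \|f\|_{H^1}$, and nontriviality holds because the dense class of $f \in H^1(\mathbb{R})$ with $\hat{f}$ compactly supported away from $0$ certainly contains elements with $\hat{f}(\zeta) \neq 0$. On the other hand, $\zeta = 0$ must be excluded: for any $f \in H^1(\mathbb{R})$ the quantity $\widehat{Hf}(\zeta) = -i\,\mathrm{sgn}(\zeta)\,\hat{f}(\zeta)$ is the Fourier transform of the $L^1$ function $Hf$, hence continuous at the origin, which forces $\hat{f}(0) = 0$. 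Thus the functional attached to $\zeta = 0$ vanishes identically and is not a character.

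The main step is the converse, namely that every nonzero continuous homomorphism $\chi$ on $H^1(\mathbb{R})$ equals some $\chi_\zeta$. Here I would apply the standard ideal-extension trick: choose $u \in H^1(\mathbb{R})$ with $\chi(u) \neq 0$ and set $\tilde{\chi}(g) := \chi(g*u)/\chi(u)$ for $g \in L^1(\mathbb{R})$, which is meaningful since $g*u \in H^1(\mathbb{R})$. Using commutativity of convolution together with the multiplicativity of $\chi$ on $H^1(\mathbb{R})$, the identities $\chi(g*u)\chi(v) = \chi(g*u*v) = \chi(g*v)\chi(u)$ and $\chi(g_1*u)\chi(g_2*u) = \chi(g_1*g_2*u)\chi(u)$ show that $\tilde{\chi}$ is independent of $u$ and is multiplicative, while the module estimate $\|g*u\|_{H^1} \leq C\|g\|_{L^1}\|u\|_{H^1}$ gives continuity on $L^1(\mathbb{R})$. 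Hence $\tilde{\chi}$ is a nonzero character of $L^1(\mathbb{R})$, so $\tilde{\chi}(g) = \hat{g}(\zeta)$ for some $\zeta \in \mathbb{R}$. Restricting to $f \in H^1(\mathbb{R})$ and using multiplicativity once more yields $\tilde{\chi}(f) = \chi(f*u)/\chi(u) = \chi(f)$, so $\chi(f) = \hat{f}(\zeta)$ for all $f$; and $\zeta \neq 0$ because otherwise $\chi$ would vanish by $\hat{f}(0) = 0$.

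Combining the two directions produces a bijection $\zeta \leftrightarrow \chi_\zeta$ between $\mathbb{R}\setminus\{0\}$ and $\Delta$ under which the Gelfand transform of $f$ is precisely $\hat{f}$ restricted to $\mathbb{R}\setminus\{0\}$. To upgrade this to a homeomorphism, I would note that $\zeta \mapsto \chi_\zeta$ is weak*-continuous since each $\zeta \mapsto \hat{f}(\zeta)$ is continuous, and that the Gelfand topology recovers the usual topology on $\mathbb{R}\setminus\{0\}$. I expect the only genuinely delicate point to be checking that $\tilde{\chi}$ is well defined and multiplicative independently of the choice of $u$; every remaining ingredient is either the cited Gelfand theory of $L^1(\mathbb{R})$ or the module estimate already implicit in $L^1(\mathbb{R})*H^1(\mathbb{R}) \subseteq H^1(\mathbb{R})$.
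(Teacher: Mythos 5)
The paper offers no proof of this lemma to compare against: it is stated purely as a citation to Johnson and Warner \cite{johnsonwarner10} (``The following result is proved in...''), so your argument fills in what the paper leaves to the literature rather than paralleling an existing proof. Your route --- realize $H^1(\mathbb{R})$ as a convolution ideal in $L^1(\mathbb{R})$, extend each character $\chi$ of $H^1(\mathbb{R})$ to a character $\tilde\chi(g)=\chi(g*u)/\chi(u)$ of $L^1(\mathbb{R})$, and invoke the classical identification of the character space of $L^1(\mathbb{R})$ with $\mathbb{R}$ --- is the standard ideal-extension argument, and it is correct: the well-definedness and multiplicativity identities go through exactly as you write them, the exclusion of $\zeta=0$ via the jump of the sign function in $\widehat{Hf}(\zeta)=-i\,\frac{\zeta}{|\zeta|}\hat f(\zeta)$ is the right reason that $\hat f(0)=0$ for every $f\in H^1(\mathbb{R})$, and every input you use (the inclusion $L^1(\mathbb{R})*H^1(\mathbb{R})\subseteq H^1(\mathbb{R})$, the density of functions whose Fourier transform is compactly supported away from $0$) is available in the paper's preliminaries.

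Three points to tighten. First, the closed graph theorem is unnecessary: since $H(g*f)=g*(Hf)$, one gets $\|g*f\|_{H^1}=\|g*f\|_{L^1}+\|g*(Hf)\|_{L^1}\le\|g\|_{L^1}\|f\|_{H^1}$ directly, i.e.\ the module bound holds with $C=1$. Second, you assert that $\tilde\chi$ is nonzero before justifying it; the justification is precisely the restriction identity $\tilde\chi(f)=\chi(f*u)/\chi(u)=\chi(f)$ for $f\in H^1(\mathbb{R})$, which you only record two sentences later. State it first, since nonvanishing is what entitles you to quote the classification of nonzero characters of $L^1(\mathbb{R})$. Third, both the injectivity of $\zeta\mapsto\chi_\zeta$ and the claim that the Gelfand topology agrees with the Euclidean topology need one more line, supplied by the same bump functions you used for nontriviality: given $\zeta_0\neq0$ and a Euclidean neighborhood $U$ of $\zeta_0$ with $0\notin U$, choose a Schwartz $f$ with $\hat f$ supported in $U$ and $\hat f(\zeta_0)=1$ (such $f$ lies in $H^1(\mathbb{R})$ because $Hf$ is again Schwartz); then $\{\zeta:|\hat f(\zeta)|>1/2\}$ is a Gelfand-open neighborhood of $\zeta_0$ contained in $U$, and such functions also separate distinct points of $\mathbb{R}\setminus\{0\}$. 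Combined with the continuity of each $\hat f$, which gives the reverse comparison of topologies, this makes your continuous bijection a homeomorphism. With these insertions the proof is complete.
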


Using this, we can derive the following:
\begin{theorem}
The space $H^1(\mathbb{R})$ is a regular semi-simple Banach algebra.
\end{theorem}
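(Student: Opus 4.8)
The plan is to establish the two defining properties of a regular semi-simple commutative Banach algebra separately, in each case transporting the question to $\mathbb{R}\setminus\{0\}$ by means of Lemma \ref{maximalideal}, which identifies the maximal ideal space $\Delta$ with $\mathbb{R}\setminus\{0\}$ and the Gelfand transform of $f\in H^1(\mathbb{R})$ with the restriction of $\hat f$ to $\mathbb{R}\setminus\{0\}$. Recall that semi-simplicity means the Gelfand transform is injective, while regularity (in the sense of Shilov) means that for every closed $F\subset\Delta$ and every $\zeta_0\in\Delta\setminus F$ there is an element of the algebra whose Gelfand transform is nonzero at $\zeta_0$ and vanishes on $F$. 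For semi-simplicity I would argue directly from Fourier uniqueness: if $f\in H^1(\mathbb{R})$ has vanishing Gelfand transform, i.e.\ $\hat f(\zeta)=0$ for all $\zeta\neq 0$, then since $f\in L^1(\mathbb{R})$ its Fourier transform is continuous, so $\hat f(0)=\lim_{\zeta\to 0}\hat f(\zeta)=0$ as well (indeed this value is the mean $\int_{\mathbb{R}}f$, which always vanishes in $H^1$). Hence $\hat f\equiv 0$, and injectivity of the Fourier transform on $L^1(\mathbb{R})$ forces $f=0$.

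For regularity I would produce the required separating functions by prescribing their Fourier transforms. Fix a closed $F\subset\mathbb{R}\setminus\{0\}$ and a point $\zeta_0\in(\mathbb{R}\setminus\{0\})\setminus F$. One can choose $\delta>0$ and a neighborhood $U$ of $\zeta_0$ with $\overline{U}\subset\{|\zeta|>\delta\}$ and $\overline{U}\cap F=\emptyset$, and then pick $\psi\in C_c^\infty(\mathbb{R})$ supported in $U$ with $\psi(\zeta_0)=1$. Taking $f$ to be the inverse Fourier transform of $\psi$ gives $\hat f=\psi$, so that $\hat f(\zeta_0)=1\neq 0$ and $\hat f|_F=0$, exactly as the definition of regularity demands.

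The step I expect to be the crux is verifying that this $f$ genuinely lies in $H^1(\mathbb{R})$ and not merely in $L^1(\mathbb{R})$. Since $\psi$ is smooth with compact support, $f$ is a Schwartz function, hence in $L^1(\mathbb{R})$. For the Hilbert transform, $\widehat{Hf}(\zeta)=-i\,\mathrm{sgn}(\zeta)\,\psi(\zeta)$; because $\psi$ vanishes on $(-\delta,\delta)$, the factor $\mathrm{sgn}(\zeta)$ is constant on each connected component of $\mathrm{supp}\,\psi$, so $\mathrm{sgn}(\zeta)\psi(\zeta)$ is again a $C_c^\infty$ function supported away from $0$. Thus $Hf$ is Schwartz as well, so $Hf\in L^1(\mathbb{R})$ and therefore $f\in H^1(\mathbb{R})$. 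This is precisely the class of functions noted in Section \ref{preliminaries} to be dense in $H^1(\mathbb{R})$, and constructing separators within it establishes regularity; combined with the semi-simplicity argument above, this completes the proof.
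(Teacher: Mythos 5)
Your proof is correct and follows the same overall route as the paper: semi-simplicity from Fourier uniqueness on $L^1(\mathbb{R})$, and regularity by producing, for each closed $C\subset\Delta\equiv\mathbb{R}\setminus\{0\}$ and point $p\notin C$, a function whose Fourier transform is a compactly supported bump separating $p$ from $C\cup\{0\}$. The one genuine difference is at exactly the step you flagged as the crux: the paper invokes Urysohn's lemma to obtain a merely \emph{continuous} compactly supported $F$ and then asserts $F=\hat f$ for some $f\in H^1(\mathbb{R})$, which is not justified as stated, since the Fourier image of $L^1(\mathbb{R})$ is a proper subspace of $C_0(\mathbb{R})$ and a continuous compactly supported function need not belong to it. Your choice of $\psi\in C_c^\infty$ supported away from the origin repairs this: $\check\psi$ is Schwartz, and since $\mathrm{sgn}(\zeta)\psi(\zeta)$ is again $C_c^\infty$ (because $\psi$ vanishes near $0$), the Hilbert transform of $\check\psi$ is Schwartz as well, so $\check\psi\in H^1(\mathbb{R})$; thus your argument is the rigorous version of the paper's. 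One small point you leave implicit, which the paper treats explicitly: regularity refers to closed sets in the Gelfand topology on $\Delta$, so one must know that Gelfand-closed sets are Euclidean-closed. The direction you need is immediate, since the Gelfand topology is the weak topology induced by the functions $\hat f$, each of which is Euclidean-continuous on $\mathbb{R}\setminus\{0\}$, so the Gelfand topology is coarser and its closed sets are Euclidean-closed; the paper proves the stronger statement that the two topologies coincide. Adding one sentence to this effect would make your proof complete on its own terms.
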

\begin{proof}
By the uniqueness of Fourier transform on $L^1(\mathbb{R})$, $H^1(\mathbb{R})$ is semi-simple. In order to prove that $H^1(\mathbb{R})$ is regular, we need to show that its Gelfand representation is a regular function algebra. This is equivalent to say that for every (weakly) closed set $C \subset \Delta \equiv \mathbb{R} \backslash \{0\}$ and every point $p \notin C$, there exists  $f \in H^1(\mathbb{R})$ such that $ \hat{f} \equiv 0$ on $C$ and $\hat{f}(p) \neq 0$. But the weak topology on $\Delta$ defined by functions in the Gelfand representation of $H^1(\mathbb{R})$ is equivalent to the topology on $\mathbb{R} \backslash \{0\}$ generated by the Euclidean distance $d(\zeta,\eta)=|\zeta-\eta|$, which follows from the fact that for every function $f \in H^1(\mathbb{R})$, $\hat{f}$ is a bounded and continuous function with $\hat{f}(0)=0$. Now by Urysohn's lemma, there exists a continuous function $F$ having compact support containing $p$ away from $\{0\}$ and $C$. This $F=\hat{f}$ for some $f \in H^1(\mathbb{R})$ serves the purpose.
\end{proof}

Now the above results ensures that:
\begin{theorem}\label{properideal}
Every proper closed ideal of $H^1(\mathbb{R})$ is contained in a regular maximal ideal.
\end{theorem}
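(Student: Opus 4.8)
The plan is to recognize this statement as the Wiener--Tauberian theorem for $H^1(\mathbb{R})$ and to prove it in its contrapositive ``empty-hull'' form. By \autoref{maximalideal} the regular maximal ideals are exactly the sets $M_\zeta=\{f\in H^1(\mathbb{R}):\hat f(\zeta)=0\}$ for $\zeta\in\mathbb{R}\setminus\{0\}$, so a closed ideal $I$ is contained in a regular maximal ideal precisely when its \emph{hull} $h(I)=\{\zeta\in\mathbb{R}\setminus\{0\}:\hat f(\zeta)=0\text{ for all }f\in I\}$ is nonempty. Thus it suffices to show: if $I$ is a closed ideal with $h(I)=\emptyset$, then $I=H^1(\mathbb{R})$. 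I would run the classical regular-algebra argument, using only the two structural facts already recorded in the excerpt: that $H^1(\mathbb{R})$ is regular (so Urysohn-type cutoffs with prescribed Gelfand/Fourier transforms exist), and the Tauberian density statement that functions $g$ whose $\hat g$ is compactly supported away from $0$ are dense in $H^1(\mathbb{R})$.

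First I would fix such a $g$, with $K:=\operatorname{supp}\hat g$ compact and $0\notin K$, and produce an element of $I$ whose transform is nonvanishing on $K$. Since $h(I)=\emptyset$, for each $\zeta\in K$ there is $f_\zeta\in I$ with $\hat f_\zeta(\zeta)\neq0$, hence $\hat f_\zeta\neq0$ on a neighborhood; by compactness of $K$ choose finitely many $f_1,\dots,f_n\in I$ whose transforms have no common zero on $K$. Writing $\tilde f(x)=\overline{f(-x)}$, one checks $\widehat{\tilde f}=\overline{\hat f}$, and because the Hilbert transform has an odd real kernel, reflection and conjugation preserve $H^1(\mathbb{R})$, so each $\tilde f_j\in H^1(\mathbb{R})$. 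As $I$ is an ideal, $u:=\sum_{j=1}^n \tilde f_j * f_j\in I$, and its transform $\hat u=\sum_{j=1}^n|\hat f_j|^2$ is continuous, nonnegative, and strictly positive on an open neighborhood $U\supseteq K$.

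The crux is then a division (local inversion) step: I would construct $w\in H^1(\mathbb{R})$ with $w*u=g$, i.e. $\hat w\,\hat u=\hat g$. Setting $\hat w=\hat g/\hat u$ on $U$ and $\hat w=0$ off $U$ gives a function supported in $K$ that formally does the job, but since $\hat u$ is only continuous one must certify that $\hat g/\hat u$ is genuinely the Fourier transform of an $H^1$ function. This is exactly where the real work lies, and I expect it to be the main obstacle: it is supplied by the local Wiener--L\'evy theorem, which says that on a compact set where $\hat u$ is bounded away from $0$, the function $1/\hat u$ agrees with the transform of an $L^1$ function, so $\hat g\cdot(1/\hat u)$ is again such a transform; a fixed cutoff obtained from regularity confines the spectrum to $K\subset\mathbb{R}\setminus\{0\}$, whence $w\in L^1(\mathbb{R})$ with $\hat w(0)=0$ and spectrum off the origin, so $w\in H^1(\mathbb{R})$. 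Equivalently, one may bypass the explicit inversion and phrase the whole argument through the abstract localization principle for regular semisimple Tauberian algebras of \cite{loomis1953}: $g$ belongs to $I$ locally at every finite point of the spectrum (by the division just described) and locally at infinity (since $\hat g$ has compact support), and local membership everywhere forces global membership.

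Either way, $g=w*u\in I$. Since this holds for every $g$ with $\hat g$ compactly supported away from $0$, and such $g$ are dense in $H^1(\mathbb{R})$ while $I$ is closed, we conclude $I=H^1(\mathbb{R})$. This contradicts the properness of $I$, so $h(I)\neq\emptyset$ and $I\subseteq M_\zeta$ for some $\zeta\in\mathbb{R}\setminus\{0\}$, as claimed.
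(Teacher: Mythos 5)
Your argument is correct, but it follows a genuinely different (more explicit) route than the paper. The paper's proof is essentially a three-line verification: having already established that $H^1(\mathbb{R})$ is a regular semi-simple Banach algebra whose Gelfand transform is the Fourier transform on $\mathbb{R}\setminus\{0\}$, and that functions whose Fourier transform is compactly supported away from the origin are dense, it invokes as a black box the abstract lemma of \cite[p.~85]{loomis1953} (in a regular semi-simple Banach algebra in which elements with compactly supported Gelfand transform are dense, every proper closed ideal lies in a regular maximal ideal). What you have done is, in effect, re-prove that abstract lemma in the concrete setting of $H^1$: the empty-hull reduction via \autoref{maximalideal}, the compactness argument producing $u=\sum_j \tilde f_j*f_j\in I$ with $\hat u=\sum_j|\hat f_j|^2>0$ near $K=\operatorname{supp}\hat g$, and the division step $w*u=g$ are exactly the classical Wiener--Tauberian mechanism, with the local Wiener--L\'evy theorem for $\mathcal{F}L^1(\mathbb{R})$ playing the role of Loomis's localization machinery. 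Your version buys transparency, and in particular it correctly isolates the one genuinely $H^1$-specific subtlety that the black-box approach hides: the Wiener--L\'evy inverse a priori lives only in $L^1$, and since a closed ideal of $H^1$ is a priori only an $H^1$-module (not obviously an $L^1$-module), you must pull $w$ back into $H^1$; your spectral confinement does this legitimately, since a function in $L^1$ whose Fourier transform is compactly supported away from $0$ has Hilbert transform equal to a convolution with a Schwartz function and hence lies in $H^1$ (the same observation underlying the paper's density claim). Your closing remark about the abstract localization principle is precisely the bridge to the paper's proof. The only unproved import is the local Wiener--L\'evy theorem; that is a classical result about the Wiener algebra, independent of anything being proved here, so it is a citation rather than a gap.
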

For the proof, we need the following Lemma.
\begin{lemma}\cite[p.~85]{loomis1953}
Let $B$ be a regular semi-simple Banach algebra with the property that the set of elements $f$ such that $\hat{f}$ has compact support is dense in $B$. Then every proper closed ideal is included in a regular maximal ideal. 
\end{lemma}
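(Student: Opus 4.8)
The plan is to prove the contrapositive, combined with a Wiener-type localization argument. Recall that in a commutative Banach algebra the maximal regular (modular) ideals are precisely the kernels of the characters, i.e.\ the points of the maximal ideal space $\Delta$; hence a closed ideal $I$ is contained in some regular maximal ideal if and only if its hull
$$h(I) := \{\, \varphi \in \Delta : \widehat{f}(\varphi) = 0 \text{ for all } f \in I \,\}$$
is nonempty. So it suffices to establish that $h(I) = \emptyset$ forces $I = B$. Writing $B_c := \{\, f \in B : \widehat f \text{ has compact support}\,\}$, which is dense by hypothesis, and using that $I$ is closed, it is enough to show $B_c \subseteq I$, for then $B = \overline{B_c} \subseteq I$.

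Now I would fix $g \in B_c$ and set $K := \operatorname{supp}\widehat g$, a compact subset of $\Delta$. Because $h(I) = \emptyset$, for every $\varphi \in K$ there is an $f_\varphi \in I$ with $\widehat{f_\varphi}(\varphi) \neq 0$; by continuity of the Gelfand transform $\widehat{f_\varphi}$ is nonzero on an open neighborhood $V_\varphi$ of $\varphi$. Compactness of $K$ yields a finite subcover $V_{\varphi_1}, \dots, V_{\varphi_n}$, and I write $f_j := f_{\varphi_j} \in I$. The heart of the argument is to manufacture from these finitely many elements a single $u \in I$ whose transform is identically $1$ on a neighborhood of $K$. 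Granting this, we get $\widehat u\,\widehat g = \widehat g$ on all of $\Delta$ (the two agree on $K \supseteq \operatorname{supp}\widehat g$ and both vanish off $K$), so by \emph{semisimplicity} $u g = g$; since $u \in I$ and $I$ is an ideal, $ug \in I$, whence $g \in I$. As $g \in B_c$ was arbitrary, $B_c \subseteq I$ and therefore $I = B$, which is the desired conclusion.

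It remains to construct the local unit $u \in I$, and this is where I expect the main difficulty. The key fact is that $\widehat{f_1}, \dots, \widehat{f_n}$ have no common zero on $K$. Using regularity I would fix an open $U$ with $K \subseteq U \subseteq \bigcup_j V_{\varphi_j}$ and produce a partition of unity $\{e_j\}$ subordinate to $\{V_{\varphi_j}\}$, i.e.\ elements of $B$ with $\operatorname{supp}\widehat{e_j} \subseteq V_{\varphi_j}$ and $\sum_j \widehat{e_j} = 1$ on a neighborhood of $K$; regularity of $B$ is exactly what supplies such compactly supported localizing elements over the compact set $K$. On each $V_{\varphi_j}$ the transform $\widehat{f_j}$ is bounded away from $0$, so one inverts it there and sets $v_j \in B$ with $\widehat{f_j}\,\widehat{v_j} = \widehat{e_j}$ near $K$, obtained via single-variable holomorphic functional calculus applied to $f_j$ (equivalently an Arens--Calder\'on argument on the tuple $(f_1,\dots,f_n)$, whose joint Gelfand image omits the origin over $K$). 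Setting
$$u := \sum_{j=1}^{n} f_j\, v_j$$
gives $u \in I$ since each $f_j \in I$, and $\widehat u = \sum_j \widehat{f_j}\,\widehat{v_j} = \sum_j \widehat{e_j} = 1$ on a neighborhood of $K$, as required.

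The delicate points, which I would verify most carefully, are the two uses of regularity and functional calculus above: that regularity genuinely yields the compactly supported bump elements $e_j$ forming a partition of unity over $K$, and that local nonvanishing of $\widehat{f_j}$ can be upgraded to an honest inversion $\widehat{f_j}\,\widehat{v_j} = \widehat{e_j}$ inside the algebra. Both are standard in the theory of regular commutative Banach algebras but use more than the bare separation property, so this is precisely the stage at which the Tauberian (dense compact-support) hypothesis enters to confine everything to the compact set $K$ and to bypass behavior at infinity, while semisimplicity is indispensable at the end to pass from the transform identity $\widehat{ug} = \widehat g$ to the algebra identity $ug = g$.
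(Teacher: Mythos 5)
Your global architecture is exactly the classical argument: the paper itself does not prove this lemma (it cites Loomis, p.~85), and Loomis's proof follows precisely your outline --- reduce to showing that an ideal with empty hull contains every element with compactly supported transform, build a local unit $u \in I$ with $\widehat{u} \equiv 1$ near $K = \operatorname{supp}\widehat{g}$, and use semisimplicity plus density to conclude $I = B$. All of those reductions in your write-up are correct, including the observation that $\widehat{u}\,\widehat{g} = \widehat{g}$ on all of $\Delta$ and the final appeal to semisimplicity.

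The one genuine gap is the mechanism you propose for the local inversion producing $v_j$. Single-variable holomorphic functional calculus applied to $f_j$ cannot yield $\widehat{v_j} = 1/\widehat{f_j}$ near $\varphi_j$: since $B$ is non-unital (transforms vanish at infinity; in the paper's case $H^1(\mathbb{R})$ they also vanish at $0$), one has $0 \in \sigma(f_j) = \overline{\widehat{f_j}(\Delta)} \cup \{0\}$, and if $0$ and $c := \widehat{f_j}(\varphi_j)$ lie in the same connected component of the open set on which the symbol $h$ is holomorphic --- which cannot be ruled out, as $\sigma(f_j)$ may be connected --- the identity theorem forbids $h = 1/z$ near $c$ while $h$ remains holomorphic at $0$. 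The Arens--Calder\'on variant on the tuple has the same defect: the required symbol fails to be holomorphic near the part of the joint spectrum over $z = 0$, and that part is not separated from the rest. The standard repair, and the point where regularity genuinely enters, is a Neumann-series argument: normalize $c = 1$, use regularity to pick $e_j$ with $\widehat{e_j} \equiv 1$ near $\varphi_j$ and $\operatorname{supp}\widehat{e_j} \subseteq \{\lvert \widehat{f_j} - 1\rvert < 1/2\}$; then $w_j := e_j - f_j e_j$ has $\sup_\Delta \lvert \widehat{w_j}\rvert \leq 1/2$, hence spectral radius $< 1$, so $s_j := \sum_{n \geq 1} w_j^{\,n}$ converges in $B$, and $v_j := e_j + e_j s_j$ satisfies $\widehat{f_j v_j} \equiv 1$ near $\varphi_j$ (there $\widehat{w_j} = 1 - \widehat{f_j}$ and $1 + \widehat{s_j} = (1-\widehat{w_j})^{-1}$). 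With the local units $u_j := f_j v_j \in I$ in hand you can also dispense with the partition of unity: expanding $u := 1 - \prod_j (1 - u_j)$ gives an element of $I$ (every term of the expansion contains some $u_j$) with $\widehat{u} \equiv 1$ on the union of the neighborhoods, hence near $K$. With this replacement your proof is complete and coincides with the cited one.
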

\begin{proof}[Proof of \autoref{properideal}]
Note that the Gelfand transform on $H^1(\mathbb{R})$ is equivalent to the Fourier transform restricted to $\mathbb{R} \backslash \{0\}$ and the set of all $f \in L^1(\mathbb{R})$ such that $\hat{f}$ has compact support away from $0$ is dense in $H^1(\mathbb{R})$. Thus $H^1(\mathbb{R})$ is a regular semi-simple Banach algebra such that the set of those functions whose Gelfand transform has compact support is dense in $ H^1(\mathbb{R})$. Hence the result follows from the above lemma.
\end{proof}

Next we have the following:
\begin{lemma}\label{idealinvariant}
Let $I$ be a closed linear subspace of $H^1(\mathbb{R})$. Then $I$ is translation-invariant if and only if $I$ is an ideal.
\end{lemma}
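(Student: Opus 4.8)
The plan is to prove the two implications separately, using the convolution structure of $H^1(\mathbb{R})$ together with the two facts recorded in \autoref{preliminaries}: that $H^1(\mathbb{R})$ is a Banach algebra under convolution with $L^1(\mathbb{R}) * H^1(\mathbb{R}) \subseteq H^1(\mathbb{R})$, and that it possesses an (unbounded) approximate identity $\{v_n\}$. Throughout I would use that translation is an isometry of $H^1(\mathbb{R})$ onto itself — since the Hilbert transform is a Fourier multiplier it commutes with $\tau_\lambda$, and the $L^1$ norm is translation invariant, so $\|\tau_\lambda g\|_{H^1} = \|g\|_{H^1}$ — together with the elementary identity $\tau_\lambda(g * h) = (\tau_\lambda g) * h = g * (\tau_\lambda h)$.

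For the implication ``ideal $\Rightarrow$ translation-invariant'', fix $f \in I$ and $\lambda \in \mathbb{R}$. Using the approximate identity I would write $\tau_\lambda f = \lim_n \tau_\lambda(v_n * f) = \lim_n (\tau_\lambda v_n) * f$, where the limit is in $H^1$-norm and uses continuity of $\tau_\lambda$ on $H^1(\mathbb{R})$. Since $\tau_\lambda v_n \in H^1(\mathbb{R})$ and $I$ is an ideal, each term $(\tau_\lambda v_n) * f$ lies in $I$; as $I$ is closed, the limit $\tau_\lambda f$ lies in $I$ as well.

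For the converse, ``translation-invariant $\Rightarrow$ ideal'', fix $f \in I$ and $g \in H^1(\mathbb{R}) \subseteq L^1(\mathbb{R})$. The key observation is the vector-valued representation $g * f = \int_{\mathbb{R}} g(y)\, \tau_y f \, dy$, interpreted as a Bochner integral in $H^1(\mathbb{R})$. The integrand $y \mapsto g(y)\tau_y f$ is Bochner integrable because $y \mapsto \tau_y f$ is continuous (hence strongly measurable) and $\int_{\mathbb{R}} \|g(y)\tau_y f\|_{H^1}\, dy = \|f\|_{H^1}\|g\|_{L^1} < \infty$; one then checks that the $H^1$-valued integral coincides with the pointwise convolution $g * f$. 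Since $\tau_y f \in I$ for every $y$ by translation-invariance, I would conclude $g * f \in I$ via Hahn--Banach: for every $\varphi \in \mathrm{BMO}(\mathbb{R})$ annihilating $I$ one has $\langle g*f, \varphi\rangle = \int_{\mathbb{R}} g(y)\langle \tau_y f, \varphi\rangle\, dy = 0$, and since $I$ is closed it equals the intersection of the kernels of all such functionals.

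The main obstacle I anticipate is making the Bochner-integral identification rigorous — namely verifying that the $H^1$-valued integral $\int_{\mathbb{R}} g(y)\tau_y f\, dy$ really coincides with the function $g*f$ and that it may be pulled inside the dual pairing with $\mathrm{BMO}(\mathbb{R})$. This rests on strong measurability of $y \mapsto \tau_y f$ (from continuity of translation in $H^1$) and the integrability bound above; once these are in place, the closedness of $I$ does the rest. An alternative, avoiding vector-valued integration, is to approximate $g*f$ by Riemann sums $\sum_i g(y_i)\,\tau_{y_i} f\, \Delta y_i$, each of which lies in $I$ because $I$ is a translation-invariant subspace, and then to verify that these sums converge to $g*f$ in $H^1$-norm.
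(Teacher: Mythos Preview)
Your proposal is correct and follows essentially the paper's approach: the ideal $\Rightarrow$ translation-invariant direction is identical (approximate identity plus closedness), and for the converse both arguments reduce via Hahn--Banach to showing $\langle g*f, h\rangle_{\mathrm{BMO}} = 0$ for every $h \in I^{\perp}$. The only difference is in the technical justification of this step: where you invoke a Bochner-integral representation $g*f = \int_{\mathbb{R}} g(y)\,\tau_y f\,dy$ and commute the BMO pairing with the vector-valued integral, the paper instead uses \autoref{bmodualapprox} to replace $h$ by bounded truncations $h_r \in L^{\infty}$, applies ordinary Fubini to $\iint f(x-y)g(y)h_r(x)\,dx\,dy$, and then lets $r\to\infty$ via dominated convergence. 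The paper's route sidesteps vector-valued integration entirely; yours is more abstract and portable but requires the identification step you correctly flagged as the main point to check.
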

\begin{proof}
First, suppose that $I$ is an ideal. Let $f \in I$, $x \in \mathbb{R}$ and $\{v_n\}_{n \in \mathbb{N}}$ be an approximate identity sequence in $H^1(\mathbb{R})$. Then we have
\begin{align*}
    \tau_xf & = \tau_x \left( \lim\limits_{n \to \infty} v_n*f \right)\\
    & = \lim\limits_{n \to \infty} \tau_x(v_n*f) \\
    & = \lim\limits_{n \to \infty} (\tau_xv_n)*f
\end{align*}
Since $I$ is an ideal, $(\tau_xv_n)*f \in I$ for all $n \in \mathbb{N}$. Moreover, since $I$ is closed, their limit also belongs to $I$.\\
Now, suppose that $I$ is translation-invariant and define $I^{\perp}:=\{ h \in \text{BMO}(\mathbb{R}):\langle f,h \rangle_{\text{BMO}} =0 \text{ for all }f \in I  \}$. Let  $f \in I$, $g \in H^1(\mathbb{R})$. By the Hahn-Banach theorem, $f*g \in I$ if and only if $\langle  f*g,h \rangle_{\text{BMO}} =0$ for all $h \in I^{\perp}$. Indeed, for $h \in I^{\perp}$ using \autoref{bmodualapprox} we have
\begin{align*}
    \langle  f*g,h \rangle_{\text{BMO}} & = \lim\limits_{r \to \infty} \int\limits_{\mathbb{R}} f*g(x)h_r(x)dx\\
    & = \lim\limits_{r \to \infty} \int\limits_{\mathbb{R}} \int\limits_{\mathbb{R}} g(y)f(x-y)h_r(x)dydx\\
     & = \lim\limits_{r \to \infty}\int\limits_{\mathbb{R}}\left(\int\limits_{\mathbb{R}} f(x-y)h_r(x)dx\right)g(y)dy \\
     & = 0 \text{ as } h \in I^{\perp}, f \in I,
\end{align*}
where the penultimate step uses Fubini's theorem and the last step is consequent upon the Lebesgue dominated convergence theorem.
\end{proof}

Now from the above results, an analogue of Wiener-Tauberian theorem for $H^1(\mathbb{R})$ can be deduced as follows:\\
\begin{theorem}\label{WTTH1}
Let $f \in H^1(\mathbb{R})$. Then  $\{\tau_{\lambda}f\}_{\lambda \in \mathbb{R}}$ is complete in $H^1(\mathbb{R})$ if and only if its Fourier transform never vanishes on $\mathbb{R} \backslash \{0\}$.
\end{theorem}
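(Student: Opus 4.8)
The plan is to reduce the completeness question to a statement about closed ideals and then invoke the Banach algebra structure developed above. First I would set $I := \overline{\mathrm{span}}\{\tau_{\lambda}f\}_{\lambda \in \mathbb{R}}$, the closure in $H^1(\mathbb{R})$ of the linear span of the translates. By construction $I$ is a closed linear subspace that is translation-invariant, so \autoref{idealinvariant} identifies $I$ as a closed ideal of $H^1(\mathbb{R})$. Completeness of $\{\tau_{\lambda}f\}_{\lambda \in \mathbb{R}}$ is precisely the statement $I = H^1(\mathbb{R})$, so it suffices to characterize when this closed ideal is the whole algebra. Throughout I would use \autoref{maximalideal}, which tells us that the regular maximal ideals of $H^1(\mathbb{R})$ are exactly the kernels $M_{\zeta_0} := \{g \in H^1(\mathbb{R}) : \hat{g}(\zeta_0) = 0\}$ for $\zeta_0 \in \mathbb{R}\setminus\{0\}$, since the Gelfand transform is evaluation of the Fourier transform.

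For the necessity direction, suppose $\hat{f}(\zeta_0) = 0$ for some $\zeta_0 \in \mathbb{R}\setminus\{0\}$. Then for every $\lambda$ one has $\widehat{\tau_{\lambda}f}(\zeta_0) = e^{i\lambda\zeta_0}\hat{f}(\zeta_0) = 0$, so every finite linear combination of translates lies in $M_{\zeta_0}$; since evaluation at $\zeta_0$ is a continuous homomorphism, its kernel $M_{\zeta_0}$ is closed, whence $I \subseteq M_{\zeta_0} \subsetneq H^1(\mathbb{R})$. Thus $I$ is proper and the system cannot be complete. Contrapositively, completeness forces $\hat{f}$ to be nonvanishing on $\mathbb{R}\setminus\{0\}$.

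For sufficiency, assume $\hat{f}$ never vanishes on $\mathbb{R}\setminus\{0\}$ and suppose toward a contradiction that $I \neq H^1(\mathbb{R})$. Then $I$ is a proper closed ideal, so by \autoref{properideal} it is contained in some regular maximal ideal $M$. By \autoref{maximalideal} we may write $M = M_{\zeta_0}$ for some $\zeta_0 \in \mathbb{R}\setminus\{0\}$. But $f = \tau_0 f \in I \subseteq M_{\zeta_0}$ forces $\hat{f}(\zeta_0) = 0$, contradicting the hypothesis. Hence $I = H^1(\mathbb{R})$ and the translates are complete.

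I expect essentially all the difficulty to have been absorbed into the preparatory results, so that this final argument is a short, direct application. The only delicate point is the sufficiency direction, which hinges entirely on \autoref{properideal} (every proper closed ideal sits inside a regular maximal ideal) — an analogue of the Wiener--Tauberian mechanism that substitutes for the use of a bounded approximate identity, which $H^1(\mathbb{R})$ notably lacks. Once that structural fact and the identification of $\Delta$ with $\mathbb{R}\setminus\{0\}$ are granted, no further analytic estimates are needed.
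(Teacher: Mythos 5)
Your proposal is correct and follows essentially the same route as the paper's own proof: identify the closed span of translates as a closed ideal via \autoref{idealinvariant}, use \autoref{properideal} together with \autoref{maximalideal} for sufficiency, and containment in the maximal ideal $\{g : \hat{g}(\zeta_0)=0\}$ for necessity. Your write-up is in fact slightly more careful than the paper's, since you spell out why every translate lies in $M_{\zeta_0}$ and why that ideal is closed and proper.
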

\begin{proof}
Let $f \in H^1(\mathbb{R})$. Then $I_f=\overline{\text{span}\{ \tau_{\lambda}f \}}_{\lambda \in \mathbb{R}}$ is a closed ideal by Lemma \ref{idealinvariant}. First suppose that $\hat{f}$ never vanishes on $\mathbb{R} \backslash \{0\}$. If $I_f$ is proper, then by \autoref{properideal}, it is contained in a regular maximal ideal, i.e., $\hat{f}(\zeta)=0$ for some $\zeta \in \mathbb{R} \backslash \{0\}$, a contradiction. Thus $ I_f=H^1(\mathbb{R})$. Now suppose that $\hat{f}(\zeta_0)=0$ for some $\zeta_0 \in \mathbb{R} \backslash \{0\}$. Then $I_f$ will be contained in the maximal ideal corresponding to $\zeta_0$ and thus $ I_f \neq H^1(\mathbb{R})$.
\end{proof}

It is worth mentioning that the result is valid for $H^1(\mathbb{R}^n)$ for general $n \in \mathbb{N}$ and its proof follows on the same line as above. In particular, this implies that for $\Lambda$-translates of a function $f$ to be complete in $H^1(\mathbb{R}^n)$ for any $\Lambda \subseteq \mathbb{R}^n$, it is necessary that $\hat{f}(\zeta)$ is non-zero for every $\zeta \in \mathbb{R}^n \backslash \{0\}$.
\section{Discrete Translates Of A Single Function}\label{CharacterizationOfGeneratingSets}

In this section, we will prove \autoref{generatingsetch}. The heart of the proof lies in a consequence of the well-known Beurling-Malliavin Density theorem \cite{beurlingmalliavin62, beurlingmalliavin67} according to which for any discrete set $\Lambda \subset \mathbb{R}$, 
$$\pi D_{BM}(\Lambda) = R(\Lambda).$$
Here, $R(\Lambda)$ is the spectral radius of $\Lambda$ defined by 
$$R(\Lambda):= \sup \{ r>0 \text{ such that } E(\Lambda) \text{ is dense in } L^2(-r,r) \}$$
where $$E(\Lambda):= span \{ e^{i \lambda \cdot} : \lambda \in \Lambda \}$$ is the set of all trigonometric polynomials with frequencies from $\Lambda$.\\

The Beurling-Malliavin Density theorem ensures, for any function in $H^1(\mathbb{R})$, the existence of a non-trivial function $f \in \text{BMO}(\mathbb{R})$ annihilating all $\Lambda$-translates of it whenever $D_{BM}(\Lambda)$ is finite and the existence of a function whose $\Lambda$-translates are complete in $H^1(\mathbb{R})$ whenever $D_{BM}(\Lambda)$ is infinite.\\

\begin{proof}[Proof of \autoref{generatingsetch}]
First, suppose that $D_{BM}(\Lambda)$ is finite. By Beurling-Malliavin Density theorem, there exists a non-trivial $g \in L^2(\mathbb{R})$ such that $g(\lambda)=0$ for all $\lambda \in \Lambda$ and $supp (\hat{g})$ is compact. We replace $g$ by a modulation of it so that $0 \notin supp(\hat{g})$. Now let $f \in H^1(\mathbb{R})$. In light of \autoref{WTTH1}, we may assume that $\hat{f}$ never vanishes on $\mathbb{R} \backslash \{0\}$.
Define $K$ by
\[
K(\zeta) = \begin{cases} 
      \frac{\hat{g}(\zeta)}{\hat{f}(\zeta)} & \text{ for }\zeta \in supp(\hat{g}), \\
      0 & \text{ otherwise} .
   \end{cases}
\]
 Then $K$ is a non-trivial continuous function with compact support. So, $k=\check{K} \in L^{\infty}(\mathbb{R}) \subset \text{BMO}(\mathbb{R})$ such that for any $\lambda \in \Lambda$
\begin{align*}
    \langle k, \tau_{\lambda}f \rangle_{\text{BMO}} & = \int\limits_{\mathbb{R}}\hat{k}(\zeta) \hat{f}(\zeta) e^{i \lambda \zeta} d \zeta\\
    & = \int\limits_{supp(\hat{g})}\hat{g}(\zeta)e^{i \lambda \zeta} d \zeta \\
    & = \sqrt{2 \pi} g(\lambda)\\
    & = 0,
\end{align*}
where the third step follows from the Fourier-Inversion Formula. This completes the proof of necessity part. 

Now suppose that $D_{BM}(\Lambda)$ is infinite. A simple consequence of the Beurling-Malliavin Density theorem is that $E(\Lambda)$ is dense in $W(I)$ for any bounded interval $I \subset \mathbb{R}$, where $W(I)$ is the Sobolev space defined by $W(I):=\{ F \in L^2(I) : F^{'} \in L^2(I) \}$ equipped with $\| F \|_{W(I)}=\| F \|_{L^2(I)} + \| F^{'} \|_{L^2(I)}$ :
\begin{lemma}\cite[Lemma ~12.16]{olevskiiulanovskii16}
Let $I$ be a bounded interval. Then every $F \in W(I)$ admits approximation in $W(I)$ with an arbitrarily small error by a polynomial
$$q(t) = \sum\limits_{\lambda \in \Lambda} c_{\lambda}e^{ i \lambda t}$$
\end{lemma}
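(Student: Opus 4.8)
The plan is to deduce $W(I)$-density from the $L^2$-density that the Beurling--Malliavin theorem already supplies. Since $D_{BM}(\Lambda)$ is infinite we have $R(\Lambda)=\pi D_{BM}(\Lambda)=\infty$, so $E(\Lambda)$ is dense in $L^2(-r,r)$ for every $r>0$, hence in $L^2(J)$ for every bounded interval $J$. The difficulty is that the $W$-norm also controls the derivative, and an arbitrary $L^2$-approximant of $F$ carries no information about the derivative of the approximant. The device that overcomes this is \emph{mollification}: convolving a trigonometric polynomial $q=\sum_{\lambda}c_\lambda e^{i\lambda t}$ with a fixed smooth bump $\psi$ yields $q*\psi=\sum_\lambda c_\lambda\left(\int_{\mathbb{R}}\psi(s)e^{-i\lambda s}\,ds\right)e^{i\lambda t}$, which is \emph{again} a member of $E(\Lambda)$, while the derivative $(q*\psi)'=q*\psi'$ falls on the (fixed, smooth) mollifier rather than on $q$. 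Thus an $L^2$-control on $q$ is automatically upgraded to a $W$-control on $q*\psi$.

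Concretely, I would fix $\varepsilon>0$ and a bounded open interval $J$ with $\bar I\subset J$, and extend $F$ to some $\tilde F\in W(J)$ (a Sobolev extension; $F$ is absolutely continuous on $\bar I$, so this is routine). Let $\psi_\eta$ be a standard mollifier supported in $(-\eta,\eta)$ with $\int\psi_\eta=1$, where $\eta$ is chosen small enough that $I+(-\eta,\eta)\subset J$ and, by the usual convergence of mollifications in the Sobolev norm on compactly contained subintervals, $\|\tilde F*\psi_\eta-F\|_{W(I)}<\varepsilon/2$. Now use $L^2(J)$-density to pick $q\in E(\Lambda)$ with $\|q-\tilde F\|_{L^2(J)}<\delta$, where $\delta$ is fixed below. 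For $t\in I$ both convolutions see only values on $J$, so Young's inequality gives $\|q*\psi_\eta-\tilde F*\psi_\eta\|_{L^2(I)}\le\delta\|\psi_\eta\|_{L^1}=\delta$, and, because $\big((q-\tilde F)*\psi_\eta\big)'=(q-\tilde F)*\psi_\eta'$, also $\|(q*\psi_\eta)'-(\tilde F*\psi_\eta)'\|_{L^2(I)}\le\delta\|\psi_\eta'\|_{L^1}$. Hence $\|q*\psi_\eta-\tilde F*\psi_\eta\|_{W(I)}\le\delta\big(1+\|\psi_\eta'\|_{L^1}\big)$.

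Choosing $\delta$ so that $\delta\big(1+\|\psi_\eta'\|_{L^1}\big)<\varepsilon/2$ (note that $\eta$, hence $\|\psi_\eta'\|_{L^1}$, is already fixed) and combining with the previous estimate yields $\|q*\psi_\eta-F\|_{W(I)}<\varepsilon$. Since $q*\psi_\eta\in E(\Lambda)$, this is the desired approximation, and $\varepsilon>0$ was arbitrary.

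The main obstacle is precisely the derivative term in the $W$-norm, to which plain $L^2$-density is blind; the whole point of the argument is that convolving with a \emph{fixed} smooth $\psi_\eta$ keeps us inside $E(\Lambda)$ yet routes every derivative onto $\psi_\eta$, so that the single quantity $\|q-\tilde F\|_{L^2(J)}$ simultaneously controls both the function and its derivative on $I$. The two error sources are thereby decoupled and driven to zero in order: first fix $\eta$ to control $\|\tilde F*\psi_\eta-F\|_{W(I)}$, then shrink $\delta$ to control the approximation of $\tilde F*\psi_\eta$ by $q*\psi_\eta$. I note that this route is cleaner than the alternative of approximating $F'$ in $L^2$ and taking an antiderivative in $E(\Lambda)$, which would force one to treat the constant (frequency-zero) component separately and would require $0\in\Lambda$ or an extra argument to approximate constants with controlled derivative.
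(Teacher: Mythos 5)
Your proof is correct. There is, however, nothing in the paper to compare it against: the authors do not prove this lemma but import it verbatim from \cite[Lemma~12.16]{olevskiiulanovskii16}, using it as a black box in the sufficiency half of \autoref{generatingsetch} (with the standing hypothesis $D_{BM}(\Lambda)=\infty$ stated in the surrounding text, which you correctly identified as part of the statement). Your argument is thus a self-contained alternative, and its two pivots are sound: convolution with a fixed bump maps $E(\Lambda)$ into itself, since $q*\psi_\eta=\sum_{\lambda}c_\lambda\widehat{\psi_\eta}(\lambda)e^{i\lambda t}$, and $(q*\psi_\eta)'=q*\psi_\eta'$ routes the derivative onto $\psi_\eta$, so the single $L^2(J)$ error controls both terms of the $W(I)$-norm; your boundary bookkeeping (the extension $\tilde F$, the condition $I+(-\eta,\eta)\subset J$, zero-extension before applying Young's inequality) and the quantifier order (fix $\eta$ first, then shrink $\delta$ against the now-fixed $\|\psi_\eta'\|_{L^1}$) are handled correctly. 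For comparison, the standard proof of the cited lemma is by duality: a functional on $W(I)$ annihilating $E(\Lambda)$ is represented by a pair $(g,h)\in L^2(I)\times L^2(I)$, the annihilation conditions say that the entire function $\Phi(z)=\widehat{g_0}(z)+iz\widehat{h_0}(z)$ (Fourier transforms of the zero-extensions) has finite exponential type and vanishes on $\Lambda$, the Beurling--Malliavin theorem forces $\Phi\equiv 0$ because $D_{BM}(\Lambda)=\infty$, and an integration by parts then shows the functional vanishes identically. Your route buys elementarity: it consumes Beurling--Malliavin only through the spectral-radius formula $R(\Lambda)=\pi D_{BM}(\Lambda)$ already quoted in the paper, with no Paley--Wiener theory, at the cost of a longer chain of estimates; the duality route is shorter but needs the uniqueness formulation for entire functions of exponential type. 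Your closing remark about the antiderivative alternative (the frequency-zero obstruction) is also an accurate diagnosis of why that more obvious reduction is messier.
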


Because of the density of the exponentials, it is natural to consider the space $$X:=\{ \hat{f} | f \in H^1(\mathbb{R}) \}$$
with $\| \hat{f} \|_X=\| f \|_{H^1(\mathbb{R})}$ for all $f \in H^1(\mathbb{R})$.\\
Moreover, we observe that a closed subspace of $W(\mathbb{R})$ is dense in $X$ with Sobolev norm stronger than $\|\cdot\|_X$ on it:

\begin{lemma}\label{W0densityX}
Define $$W_0(\mathbb{R}):=\{ F \in W(\mathbb{R}) | F(0)=0\}.$$
Then $W_0(\mathbb{R})$ is dense in $X$ with $\|F\|_X \leq d_0 \|F\|_{W(\mathbb{R})}$ for all $F \in W_0(\mathbb{R})$ where $d_0$ is a constant independent of $F$.
\end{lemma}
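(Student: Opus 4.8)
The plan is to prove the two assertions separately: first the continuous inclusion $W_0(\mathbb{R}) \subset X$ with the stated norm estimate, and then the density. Both hinge on choosing the molecular parameters $q = 2$, $a_0 = 1/2$ (so $b_0 = 1$), which are exactly calibrated so that the molecule axioms of \autoref{preliminaries} become the Sobolev data $F, F' \in L^2(\mathbb{R})$ together with $F(0) = 0$.

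For the norm bound, given $F \in W_0(\mathbb{R})$ I would set $f := \check{F}$ and exhibit $f$ as a single $(2,1/2)$-molecule centered at $x_0 = 0$. The three defining conditions translate directly into information about $F$: Plancherel gives $f \in L^2$ from $F \in L^2$; the identity $\widehat{xf} = -iF'$ (multiplication by $x$ corresponds to differentiation on the Fourier side) gives $|x|^{b_0}f = |x|f \in L^2$ from $F' \in L^2$, which is condition (1) and renders $\mathcal{N}_2(f) = \|f\|_2^{1/2}\,\||\cdot|f\|_2^{1/2}$ finite (condition (2)); and the vanishing moment $\int f = \hat{f}(0) = F(0) = 0$ is condition (3), where one uses that $F \in W(\mathbb{R})$ embeds into $C_0(\mathbb{R})$ so that $F(0)$ is a genuine pointwise value. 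Then \autoref{molecularcharH1} yields $f \in H^1(\mathbb{R})$ with $\|f\|_{H^1} \le C\,\mathcal{N}_2(f)$. Rewriting $\|f\|_2$ and $\||\cdot|f\|_2$ in terms of $\|F\|_2$ and $\|F'\|_2$ by Plancherel, and then applying AM--GM in the form $\|F\|_2^{1/2}\|F'\|_2^{1/2} \le \tfrac12(\|F\|_2 + \|F'\|_2) = \tfrac12\|F\|_{W(\mathbb{R})}$, produces $\|F\|_X = \|f\|_{H^1} \le d_0\|F\|_{W(\mathbb{R})}$.

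For density I would run the same molecular parameters in reverse. By \autoref{molecularcharH1} an arbitrary $f \in H^1(\mathbb{R})$ has a decomposition $f = \sum_k c_k m_k$ into $(2,1/2)$-molecules of uniformly bounded molecular norm with $\sum_k |c_k| < \infty$; since $\|m_k\|_{H^1} \le C\,\mathcal{N}_2(m_k)$ is uniformly bounded, the series converges in the $H^1$-norm, so the partial sums $S_N = \sum_{k \le N} c_k m_k$ converge to $f$ in $H^1(\mathbb{R})$. The key observation is that each molecule lies, after Fourier transform, in $W_0(\mathbb{R})$: $m_k \in L^2$ gives $\hat{m}_k \in L^2$; with $b_0 = 1$ the condition $|x - x_k|m_k \in L^2$ together with $m_k \in L^2$ gives $xm_k \in L^2$ (via $|x| \le |x - x_k| + |x_k|$), hence $(\hat{m}_k)' \in L^2$; and the vanishing moment gives $\hat{m}_k(0) = 0$. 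Thus each $\hat{S}_N = \sum_{k \le N} c_k \hat{m}_k$ lies in the linear space $W_0(\mathbb{R})$, and $\|\hat{f} - \hat{S}_N\|_X = \|f - S_N\|_{H^1} \to 0$, proving density.

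I expect the main obstacle to be the first half: pinning down parameters for which the Sobolev conditions $F, F' \in L^2$ and $F(0) = 0$ coincide \emph{exactly} with the three molecule axioms and verifying that the molecular norm is then controlled by $\|F\|_{W(\mathbb{R})}$ rather than by a larger quantity. The choice $b_0 = 1$ (forcing $q = 2$, $a_0 = 1/2$) is precisely what matches the single weight $|x|^{b_0}$ to the one derivative furnished by $F' \in L^2$, and centering at the origin is what converts the moment condition into $F(0) = 0$. Once this calibration is in place, the density half is essentially formal, since molecules are already seen to sit inside $W_0(\mathbb{R})$.
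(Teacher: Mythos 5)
Your proposal is correct and follows essentially the same route as the paper: identify $\check{F}$ for $F \in W_0(\mathbb{R})$ as a $(2,\tfrac12)$-molecule centered at the origin, apply \autoref{molecularcharH1} together with Plancherel and AM--GM for the norm bound, and use the molecular decomposition (whose molecules have Fourier transforms in $W_0(\mathbb{R})$) for density. In fact, you spell out the density half in more detail than the paper, which compresses it into the phrase ``the lemma now follows from the molecular characterization,'' so your write-up is a faithful expansion of the intended argument.
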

\begin{proof}
By the virtue of the definition, $\check{F}$ is a $(2,\frac{1}{2})$-molecule for each $F \in W_0(\mathbb{R})$. The lemma now follows from the molecular characterization of $H^1(\mathbb{R})$ in \autoref{molecularcharH1} and
\[
\begin{array}{rl}
    \|F\|_X & = \| \check{F}\|_{H^1(\mathbb{R})}  \\
     & \leq  C_1 \text{ molecular norm of }\check{F}   \\
     & =  C_1 (\|\check{F}\|_{2}\|x\check{F}\|_{2})^{\frac{1}{2}}\\
     & \leq  C_2 (\|F\|_2\|F^{'}\|_2)^{\frac{1}{2}}\\
     & \leq  C_2 \frac{\|F\|_2+\|F^{'}\|_2}{2}\\
     & = d_0 \|F\|_{W(\mathbb{R})},
\end{array}
\]
where the third and fourth step follows from Plancherel's theorem and AM-GM Inequality respectively.
\end{proof}

%

From the above results, one can construct a function whose $\Lambda$-translates are complete in $H^1(\mathbb{R})$ analogous to Lemma 12.17 in \cite{olevskiiulanovskii16} with appropriate modifications around $0$. 
For completeness, we will give an example of one such function $f$ whose Fourier transform $\hat{f}=F$ is an infinite combination of integer translates of the tent map $\Phi$ defined by
\[
\Phi(\zeta) = \begin{cases} 
      1+\zeta & -1 \leq \zeta \leq 0, \\
      1-\zeta & 0 \leq \zeta \leq 1 ,\\
      0 & \text{ otherwise} .
   \end{cases}
\]
We can choose a dense countable set $ \mathcal{G}:=\{G_n\}_{n \in \mathbb{N}}$ in $W_0(\mathbb{R})$ such that $supp(G_n) \subseteq I_n:=[-n,-\frac{1}{n}] \cup [\frac{1}{n},n]$ and a sequence of distinct positive real numbers $\{ \epsilon_n \}_{n \in \mathbb{N}}$ such that $\epsilon_n \leq \frac{1}{2}$ and $\epsilon_n \downarrow 0$. We will inductively define a sequence of non-negative numbers $\{\delta_i\}_{i \in \mathbb{Z}}$ and $\{p_n\}_{n \in \mathbb{N}} \subseteq E(\Lambda) $ such that
$$F=\sum\limits_{i \in \mathbb{Z}} \delta_i\tau_i \Phi \in W_0(\mathbb{R}) \text{ and}$$
$$\|G_n-p_nF\|_{W(\mathbb{R})}<\epsilon_n \text{ for all } n \in \mathbb{N}.$$

For any absolutely continuous function $K$ on $\mathbb{R}$, denote $\|K\|_{*}=\|K\|_{\infty}+\|K^{'}\|_{\infty}$. Then one can easily see that for any $G \in W(\mathbb{R})$, $\|KG\|_{W(\mathbb{R})} \leq \|K\|_{*} \|G\|_{W(\mathbb{R})}$.\\

Note that if we define for $n \in \mathbb{N}$,
$$F_n:=\sum\limits_{i = -n}^n \delta_i\tau_i \Phi,$$
then one can easily observe that
\begin{enumerate}
    \item $supp(F_n)=[-(n+1),n+1]$,
    \item $\|F_n-F_{n-1}\|_{W(\mathbb{R})} < 4 \delta_n$,
    \item $\|F_n\|_{*} = 2 \max\limits_{i=1,2, \ldots,n} \delta_i $ and 
    \item $\|F_n\|_{W(\mathbb{R})}\leq C \sum\limits_{i=-n}^n \delta_i$ for a constant $C$ independent of $n$.
\end{enumerate}

Now we set $\delta_0=0$, $\delta_1=\delta_{-1}=\epsilon_1-\epsilon_2$ and define $\tilde{\delta_n}=\epsilon_n-\epsilon_{n+1}$ for all $n \in \mathbb{N}$.\\

Since $G_1=0$, by density of $E(\Lambda)$ in $W([-2,2])$, there exists $p_1 \in E(\Lambda)$ such that 
 \[
 \left\|\frac{G_1}{F_1}-p_1\right\|_{W([-2,2])}=\|p_1\|_{W([-2,2])}<\tilde{\delta_1} .
 \]
 This implies
 \[
 \Rightarrow \|G_1 -p_1F_1\|_{W(\mathbb{R})} < \|F_1\|_{*}\tilde{\delta_1} <\tilde{\delta_1}.
 \]
 
Now assume $\delta_k$ and $p_k$ are defined for all $k<n$. \\
Set $\delta_n=\delta_{-n}=\frac{\tilde{\delta_n}}{4 \max\{1,\|p_1\|_{*}, \ldots, \|p_{n-1}\|_{*}\}}$.\\
Since $supp(G_n) \subseteq I_n$ and $F_n\geq \min\{ \delta_n, \frac{\delta_1}{n}\}>0$ on $I_n$, so $\frac{G_n}{F_n} \in W([-(n+1),n+1])$ and thus by density of $E(\Lambda)$ in $W([-(n+1),n+1])$, there exists $p_n \in E(\Lambda)$ such that 
 \[
 \left\|\frac{G_n}{F_n}-p_n\right\|_{W([-(n+1),n+1])}<\tilde{\delta_n} .
 \]
 This implies
 \[
 \|G_n -p_nF_n\|_{W(\mathbb{R})} < \|F_n\|_{*}\tilde{\delta_n} <\tilde{\delta_n}.
 \]
 
Thus
\begin{align*}
     \|G_n-p_nF\|_{W(\mathbb{R})} \leq & \|G_n-p_nF_n\|_{W(\mathbb{R})} + \|p_nF_n-p_nF\|_{W(\mathbb{R})} \\
     < & \tilde{\delta_n} + \|p_n\|_{*} \|F_n-F\|_{W(\mathbb{R})}\\
     < & \tilde{\delta_n} + \|p_n\|_{*} \sum\limits_{k=n+1}^{\infty}\|F_k-F_{k-1}\|_{W(\mathbb{R})}\\
     < & \tilde{\delta_n} + \sum\limits_{k=n+1}^{\infty}\tilde{\delta_k} =  \epsilon_n.
 \end{align*}

Now indeed $\Lambda$-translates of $f=\check{F}$ are complete in $H^1(\mathbb{R})$. To see this, let $g \in H^1(\mathbb{R})$ and $\epsilon >0$. Then there exists infinitely many $n$ such that $\|g-\check{G_n}\|_{H^1(\mathbb{R})}< \epsilon/2$. Choose $N \in \mathbb{N}$ large enough such that $\epsilon_{N}<\epsilon/2d_0$, where $d_0$ is as in \autoref{W0densityX}. Let $p_N=\sum\limits_{\lambda \in \Lambda_N} c_{\lambda}^N e^{i \lambda \zeta}$  where $\Lambda_N \subset \Lambda$ is finite. Then \\
\begin{align*}
    \|g-\sum\limits_{\lambda \in \Lambda_{N}}c_{\lambda}^{N}\tau_{\lambda}f \|_{H^1(\mathbb{R})} \leq & \|g-\check{G_N}\|_{H^1(\mathbb{R})} + \|\check{G_N} - \sum\limits_{\lambda \in \Lambda_{N}}c_{\lambda}^{N}\tau_{\lambda}f\|_{H^1(\mathbb{R})}\\
    < & \epsilon/2 + d_0 \|G_N-p_{N}F\|_{W(\mathbb{R})} \\
    < & \epsilon/2 + d_0 \epsilon_{N}\\
    < & \epsilon.
\end{align*}
This completes the proof.
\end{proof}

It follows from the above result that if we want certain discrete translates of any one function to be complete in $H^1(\mathbb{R})$, then the set itself must be quite large. So the set cannot be uniformly discrete, or more generally cannot be of finite density. Moreover, it is easy to note that when we take $\Lambda=\mathbb{Z}$, there does not exist any finite collection of functions such that their integer-translates are complete in $H^1(\mathbb{R})$. In contrast to this, when we consider certain perturbations of $\mathbb{Z}$, there do exist finitely many functions such that their translates are complete in $H^1(\mathbb{R})$ as shown in the next section.

\section{Uniformly Discrete Translates Of A Pair Of Functions}\label{APairOfGenerators}
In this section, we prove \autoref{2generator}, that is whenever we consider ``very small" perturbation of integers $\Lambda$, then there exist two functions whose $\Lambda$-translates are complete in $H^1(\mathbb{R})$. The essence of the proof lies in the result \cite[Proposition ~12.21]{olevskiiulanovskii16} that whenever $\Lambda$ satisfies \autoref{perturbedintegers}, then it is a uniqueness set for the class of entire functions $\phi$ satisfying
\begin{enumerate}
    \item[(i)] $\phi(\cdot +iy) \in L^{\infty}(\mathbb{R})$, for every $y \in \mathbb{R}$ and
    \item[(ii)] the spectrum of $\phi(x)$ lies in $I+2 \pi \mathbb{Z}$, where $I$, $|I|<2 \pi$, is a closed interval.
    \item[(iii)] $\phi|_{\Lambda}=0$
\end{enumerate}
That is, if any entire function $\phi$ satisfying above conditions, then $\phi\equiv 0$.\\

\begin{proof}[Proof of \autoref{2generator}]
Suppose for any closed interval $I$ with $|I|<2 \pi$, we can find $f \in H^1(\mathbb{R})$ satisfying
$$\phi(z) := \int\limits_{\mathbb{R}}h(x)f(x-z)dx$$ 
has an entire extension for any $h \in \text{BMO}(\mathbb{R})$ and satisfies (i) and (ii) above. Let $I_1$ and $I_2$ be two closed intervals such that $0<|I_1|,|I_2|<2 \pi$ and $[-\pi , \pi] \subset I_1 \cup I_2$ . Now choose $f_1$ and $f_2$  as above with $I=I_1$ and $I=I_2$, which serve the purpose.\\

Now fix a closed interval $I$ with $0<|I|<2 \pi$. It is sufficient to find an entire function $f=\hat{F} \in H^1(\mathbb{R})$ such that 
\begin{enumerate}
    \item $f(\cdot + iy) \in H^1(\mathbb{R})$ for all $y \in \mathbb{R}$ and 
    \item Zero set of $F$ is $I+2\pi \mathbb{Z} \cup \{0\}$ and $F > 0$ elsewhere.
\end{enumerate}
Then we have $\|\phi(\cdot+iy)\|_{\infty} \leq \|h\|_{\text{BMO}}\|f(\cdot-iy)\|_{H^1}$ for all $y \in \mathbb{R}$ and $spec(\phi) \subseteq spec(\tilde{f})=-I+2 \pi \mathbb{Z}$ where $\tilde{f}(x)=f(-x)$. So $\phi$ is an entire function satisfying (i) and (ii).\\

One such choice can be made as follows: Let $J$ be an open interval such that $(I+2\pi \mathbb{Z}) \cap J = \emptyset$ and $|I|+|J|=2 \pi$. Choose any smooth function $G$ with $G>0$ on $J$ and vanishes outside $J$. Define
\[
F(t) = \sum\limits_{n \in \mathbb{Z}} c_n G(t-2\pi n),
\]
where we choose $\{c_n\}$ to be sufficiently small, say $c_n=e^{-|n|}$. Then 
\[
|F(t)| \leq C_b e^{-b|t|} \text{ for all }t, \text{ and every } b>0.
\]
So $f=\check{F}$ is the restriction to $\mathbb{R}$ of an entire function. For $f$ to be in $H^1(\mathbb{R})$, we need to redefine $F$ around $0$. If $0 \notin I+2 \pi \mathbb{Z}$, then choose $n_0$ such that $0 \in J + 2 \pi n_0$ and replace $c_{n_0}G(t-2 \pi n_0)$ in the sum defining $F$ by $c_{n_0}\tilde{G}(t-2 \pi n_0)$ where $\tilde{G}$ is a smooth function such that $\tilde{G}>0$ on $J + 2 \pi n_0 \setminus \{ 0\}$. Also, we know that if $h$ belongs to the Schwartz space $\mathcal{S}(\mathbb{R})$ with $h(0)=0$, then $\check{h} \in H^1(\mathbb{R})$. Therefore, $f$ satisfies the required conditions.
\end{proof}

In conclusion, for a uniformly discrete set $\Lambda \subset \mathbb{R}$, the minimum number of functions required such that their $\Lambda$-translates span whole of $H^1(\mathbb{R})$ must be greater than $1$. It remains open whether we can obtain a finite collection of functions for $\Lambda$ other than those satisfying \autoref{perturbedintegers}.


\section*{Acknowledgement}
We are thankful to Prof. D. Venku Naidu and Prof. P. Viswanathan  for their  valuable suggestions that improved the presentation of the paper.

\bibliographystyle{amsplain}

\begin{thebibliography}{10}

\bibitem{atzmonolevskii96} A. Atzmon and A. Olevski\u{\i}, \textit{Completeness of integer translates in function spaces on ${\mathbb{R}}$}, J. Approx. Theory, \textbf{87(3)}, (1996), 291--327.

\bibitem {beurling51} A. Beurling, \textit{On a closure problem}, Ark. Mat., \textbf{1}, (1951), 301--303.

\bibitem{beurlingmalliavin62} A. Beurling and P. Malliavin, \textit{On Fourier transforms of measures with compact support}, Acta. Math. \textbf{107} (1962), 291--309.

\bibitem{beurlingmalliavin67} A. Beurling and P. Malliavin, \textit{On the closure of characters and the zeros of entire functions}, Acta. Math. \textbf{118} (1967), 79--93. 

\bibitem {blank06} N. Blank, \textit{Generating sets for {B}eurling algebras}, J. Approx. Theory \textbf{140(1)} (2006), 61--70.

\bibitem {olevskiiulanovskii06} J. Bruna, A. Olevskii and A. Ulanovskii, \textit{Completeness in ${L}^1(\mathbb{R})$ of discrete translates}, Rev. Mat. Iberoam., \textbf{22(1)} (2006), 1--16.

\bibitem{duren1970} P. L. Duren, \textit{Theory of {$H^{p}$} spaces}, Pure and Applied Mathematics, Vol. 38, Academic Press, New York-London, 1970.

\bibitem{feffermanstein72} C. Fefferman and E. M. Stein, \textit{$H^{p}$ spaces of several variables}, Acta Math., \textbf{129(3-4)} (1972), 137--193.

\bibitem{feichtingergumber20} H. G. Feichtinger, A. Gumber, \textit{Completeness of sets of shifts in Invariant Banach Spaces of Tempered Distributions via Tauberian Theorems}, arXiv:2012.11127.

\bibitem {johnsonwarner10} R. L. Johnson and C. R. Warner, \textit{The convolution algebra $H^1(R)$}, J. Funct. Spaces Appl. \textbf{8} (2010), 167--179.

\bibitem {levolevskii11} N. Lev and A. Olevskii, \textit{Wiener's `Closure of Translates' problem and {P}iatetski-{S}hapiro's uniqueness phenomenon}, Ann. of Math. (2) \textbf{174(1)} (2011), 519--541.

\bibitem{loomis1953} L. H. Loomis, \textit{An introduction to Abstract Harmonic Analysis}, D. Van Nostrand Company, Inc., Toronto-New York-London, 1953.

\bibitem {nikolski99} N. Nikol'ski\u{\i}, \textit{Remarks concerning completeness of translates in function spaces}, J. Approx. Theory \textbf{98(2)} (1999), 303--315.

\bibitem {olevskiiulanovskii16} A. Olevskii and A. Ulanovskii, \textit{Functions with disconnected spectrum. Sampling, interpolation, translates}, University Lecture Series, 65. American Mathematical Society, Providence, RI, 2016.

\bibitem{olevskiiulanovskii18I} A. Olevskii and A. Ulanovskii, \textit{Discrete translates in ${L}^p(\mathbb{R})$}, Bull. Lond. Math. Soc. \textbf{50(4)} (2018), 561--568.

\bibitem {olevskiiulanovskii18II} A. Olevskii and A. Ulanovskii, \textit{Discrete translates in function spaces}, Anal. Math. \textbf{44} (2018), 251--261.

\bibitem{shanzhen95} Shanzhen Lu, \textit{Four lectures on real $H^p$ spaces}, World Scientific Publishing Co., Inc., River Edge, NJ, 1995.

\bibitem{steinweiss60} E. M. Stein and G. Weiss, \textit{On the theory of harmonic functions of several variables. I. The theory of $H^{p}$-spaces}, Acta Math., \textbf{103} (1960), 25--62.


\bibitem {uchiyama01} A. Uchiyama, \textit{Hardy spaces on the Euclidean space}, Springer Monographs in Mathematics, Springer-Verlag, Tokyo, 2001.

\bibitem {wiener32} N. Wiener, \textit{Tauberian theorems}, Ann. of Math. (2) \textbf{33} (1932), 1--100.

\bibitem{Zygmund1} A. Zygmund, \textit{ On the boundary values of functions of several complex variables, I.}, Fund. Math.,
\textbf{36} (1949), 207--235.

\bibitem{Zygmund2} A. Zygmund, \textit{Trigonometric Series}, Cambridge University Press, 1959.

\end{thebibliography}

\end{document}